\theoremstyle{plain}
\newtheorem{thm}{Theorem}[section]
\newtheorem{lem}[thm]{Lemma}
\newtheorem{cor}[thm]{Corollary}
\theoremstyle{definition}
\theoremstyle{remark}
\newtheorem{rem}{Remark}[section]
\newtheorem{ex}[rem]{Example}
\newcommand{\forme}[1]{}
\begin{document}
\title{Characterization of finite metric spaces by their isometric sequences}

\author{Mitsugu Hirasaka}
\author{Masashi Shinohara}

\address{Department of Mathematics, College of Sciences, Pusan National University,
63 Beon-gil 2, Busandaehag-ro, Geumjung-gu, Busan 609-735, Korea.}

\address{Department of Education, Faculty of Education, Shiga University, 2-5-1 Hiratsu, Otsu,  SHIGA 520-0862, Japan}
\email{hirasaka@pusan.ac.kr}
\email{shino@edu.shiga-u.ac.jp}
\email{}

\date{\today}

\thanks{}
\thanks{}
\thanks{}

\maketitle


\begin{abstract}
Let $(X,d)$ be a finite metric space with $|X|=n$.
For a positive integer $k$ we define $A_k(X)$ to be the quotient set of all $k$-subsets of $X$
by isometry, and we denote $|A_k(X)|$ by $a_k$.
The sequence $(a_1,a_2,\ldots,a_{n})$ is called the \textit{isometric} sequence of $(X,d)$.
In this article we aim to characterize finite metric spaces by their isometric sequences
under one of the following assumptions: (i) $a_k=1$ for some $k$ with $2\leq k\leq n-2$;
(ii) $a_k=2$ for some $k$ with $4\leq k\leq \frac{1+\sqrt{1+4n}}{2}$;
(iii) $a_3=2$; (iv) $a_2=a_3=3$. Furthermore, we give some criterion on how to embed such finite metric spaces to Euclidean spaces.  
We give some maximum cardinalities of subsets in the $d$-dimensional Euclidean space with small $a_3$, which are analogue problems on a sets with few distinct triangles discussed by 
Epstein, Lott, Miller and Palsson. 
\end{abstract}

\section{Introduction}
In the study of distance sets (see \cite{LRS} and \cite{BBS}) we deal with 
a finite set $S$ in a Euclidean space and aim to characterize $S$ by  
the number of possible distances among the elements of $S$. In this article we put our focus on
not only distances but also triangles or other kinds of subsets. Furthermore, we deal with 
a finite set in not only Euclidean spaces but also general metric spaces. Now we shall remind of
terminologies on metric spaces and their isometry. 

Let $(X,d)$ be a metric space where $d:X\times X\to \mathbb{R}_{\geq 0}$ is a
metric function. For $A, B\subseteq X$ we say that $A$ is \textit{isometric} to $B$ if
there exists a bijection $f:A\to B$ such that $d(x,y)=d(f(x),f(y))$ for all $x,y\in A$.
For a positive integer $k$ we denote the family of $k$-subsets of $X$ by $\binom{X}{k}$, i.e.,
\[\binom{X}{k}=\{Y\subseteq X\mid |Y|=k\},\]
and we define $A_k(X)$ to be the quotient set of $\binom{X}{k}$ by the isometry derived from $d$, i.e.,
\[A_k(X)=\left\{[Y]\mid Y\in \binom{X}{k}\right\}\]
where $[Y]=\left\{Z\in \binom{X}{k}\mid \mbox{$Y$ is isometric to $Z$}\right\}$.
If $X$ is a finite set, then we define the \textit{isometric} sequence of $(X,d)$ to be
\[(a_1,a_2,\ldots, a_n)\]
where $a_i=|A_i(X)|$ and $|X|=n$.
Clearly, $a_1=1$ and $a_n=1$, and $a_2=1$ implies $a_i=1$ for all $i$ with $1\leq i\leq n$.
The following are examples of isometric sequences of finite sets in Euclidean spaces:
\begin{enumerate}
\item The four vertices of a square: $(1,2,1,1)$; 
\item The five vertices of a regular pentagon: $(1,2,2,1,1)$; 
\item The six vertices of a octahedron: $(1,2,2,2,1,1)$;
\end{enumerate}

Notice that, for $x,y,z,w\in X$,
\[
\mbox{$[\{x,y\}]=[\{z,w\}]$ if and only if $d(x,y)=d(z,w)$.}\]
Thus, we may identify the elements of $A_2(X)$ as the elements of 
\[\{d(x,y)\mid x,y\in X,x\ne y\}.\]

Recall that any connected graph is a metric space with its graph distance.
The following are examples of isometric sequences of finite connected graphs:
\begin{enumerate}
\item The complete graph $K_n$: $(1,1,\ldots,1)$;
\item The complete bipartite graph $K_{3,3}$: $(1,2,2,2,1,1)$;
\item The cycle $C_6$: $(1,3,3,3,1,1)$.
\end{enumerate}
In order to generate isometric sequences we need the partition $\{E_\alpha\}_{\alpha\in A_2(X)}$ of $\binom{X}{2}$
where \[E_\alpha=\{\{x,y\}\mid d(x,y)=\alpha\}\]
but not the exact values of $\alpha\in A_2(X)$.
Actually, the discrete partition of $\binom{X}{2}$ 
induces the isometric sequence $(1,\binom{n}{2},\binom{n}{3},\ldots,\binom{n}{n-1},1)$.

In this article we aim to characterize finite metric spaces by their isometric sequences.
For example, starting from the isometric sequence $(1,2,1,1)$ we obtain 
the partition of $\binom{X}{2}$ into $E_\alpha$ and $E_\beta$ where
$(X,E_\alpha)$ is a cycle of length $4$ and $(X,E_\beta)$ is its complement. 
This is the first step in determining the partition of $\binom{X}{2}$ into $a_2$ cells
up to permutations of $X$.

As the second step we consider how to embed given finite metric spaces into Euclidean spaces
or other metric spaces.
In \cite{Ne} Neumair gives a criterion on embeddings into Euclidean spaces as follows:
Let $D$ denote the distance matrix of a finite metric space $(X,d)$, i.e., the rows and columns of $D$ are indexed by the elements of $X$ and
the $(x,y)$-entry of $D$ is defined to be $d(x,y)^2$. We set a matrix $G$ to be
\begin{equation}\label{eq:G}
 G=-(I-1/nJ)D(I-1/nJ)
\end{equation}
where $I$ and $J$ are the identity matrix and 
the all one matrix of degree $n=|X|$, respectively. 
Then $X$ is embedded into $\mathbb{R}^d$ where $d=\mathrm{rank}(G)$ if and only if $G$ is positive semidefinite.

For example, each of such matrices with the isometric sequence $(1,2,1,1)$ forms
\[\begin{pmatrix}
0 & a & b& b \\
a & 0 & b & b \\
b & b & 0 & a\\
b & b & a & 0
\end{pmatrix}\]
for some $a,b\in \mathbb{R}_{>0}$ with $a\ne b$ and a suitable ordering of the elements of $X$.
The characteristic polynomial $\det(tI-G)$ of $G$ is computed
as 
\[t(t-a)^2(t-2b+a).\]
Therefore, it is embedded into a Euclidean space if and only if $a\leq 2b$,
and the equality holds if and only if it can be embedded into $\mathbb{R}^2$.

In such a way we are required to find exact values of $\alpha\in A_2(X)$ such that
$G$ is positive definite and $\mathrm{rank}(G)$ is minimal.

In Section~2 we prepare some terminologies and basic results, and in Section~3 we discuss finite metrics spaces with the following isometric sequences $(a_1,a_2,\ldots,a_n)$:
\begin{enumerate}
\item $a_k=1$ for some $k$ with $2\leq k\leq n-2$;
\item $a_k=2$ for some $i$ with $4\leq k\leq \frac{1+\sqrt{1+4n}}{2}$;
\item $a_2=a_3=2$;
\item $a_2=a_3=3$.
\end{enumerate}
In Section~4, we discuss how to embed the above finite metrics spaces into Euclidean spaces 
and give maximum cardinalities of subsets in the $d$-dimensional Euclidean space 
with small $a_3$,   
which are analogue problems 
on a sets with few distinct triangles discussed by Epstein, Lott, Miller and Palsson 
\cite{ELMP}. 

\section{Preliminaries}
Throughout this article we assume that $(X,d)$ is a finite metric space with its isometric sequence
$(a_1,a_2,\ldots, a_n)$.
For short we shall write $x_1x_2\cdots x_k$ instead of $[\{x_1,x_2,\ldots, x_k\}]$ for $\{x_1,x_2,\ldots,x_k\}\in \binom{X}{k}$, so that
\[x_1x_2\cdots x_k=x_{\sigma(1)}x_{\sigma(2)}\cdots x_{\sigma(k)}\]
for each $\sigma\in S_k$ where $S_k$ is the symmetric group of degree $k$.
Recall that $x_1x_2\cdots x_k$ can be identified with the distance matrix $(d(x_i,x_j)^2)_{1\leq i,j\leq k}$.
Since $(d(x_i,x_j)^2)_{1\leq i,j\leq k}$ is a symmetric matrix whose diagonal entries are zero,
we may identify
\[\mbox{$x_1x_2$ with $d(x_1,x_2)$, and $x_1x_2x_3$ with
$(d(x_1,x_2),d(x_2,x_3),d(x_3,x_1))$,}\]
 which we often express without the commas or
the parenthesis, like
\[\mbox{$\alpha\beta\gamma$ for $x_1x_2x_3$}\]
 with
$\alpha=d(x_1,x_2)$, $\beta=d(x_2,x_3)$ and $\gamma=d(x_3,x_1)$.

For $\alpha\in A_2(X)$ we define a binary relation $R_\alpha$ on $X$ to be
\[R_\alpha=\{(x,y)\in X\times X\mid d(x,y)=\alpha\},\]
so that $\{R_\alpha\}_{\alpha\in A_2(X)}$ is a partition of
$\{(x,y)\in X\times X\mid x\ne y\}$.
Also, we define $E_\alpha$ to be
\[E_\alpha=\left\{\{x,y\}\in \binom{X}{2}\mid (x,y)\in R_\alpha\right\},\]
so that $(X,E_\alpha)$ is a simple graph for each $\alpha\in A_2(X)$.
For a binary relation $R$ on $X$ and $x\in X$ we set
\[R(x)=\{y\in X\mid (x,y)\in R\}.\]
For a finite metric space $(X_1,d_1)$ we say that $(X,d)$ is \textit{isomorphic} to $(X_1,d_1)$
if there exist bijections $f:X\to X_1$ and $g:A_2(X)\to A_2(X_1)$
such that, for all $x,y\in X$,
\[g(d(x,y))\Leftrightarrow d_1(f(x),f(y)).\]
Remark that any two isometric metric spaces are isomorphic, but the converse does not hold in general.

For $S,T\subseteq X$ we define a vector $v(S,T)$ whose entries are indexed by the elements of $A_2(X)$ as follows:
\[v(S,T)_\alpha=|(S\times T)\cap R_\alpha|\]
where we shall write $v(x,T)$ and $v(S,y)$ instead of $v(\{x\},T)$ and $v(S,\{y\})$
when $x,y\in X$.
\begin{lem}\label{lem:ST}
For all $S,T,U\subseteq X$ we have the following:
\begin{enumerate}
\item $v(S,T)=v(T,S)$;
\item If $S\cap T=\emptyset$, then $v(S\cup T,U)=v(S,U)+v(T,U)$;
\item If $S$ is isometric to $T$, then $v(S,S)=v(T,T)$.
\end{enumerate}
\end{lem}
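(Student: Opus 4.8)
The plan is to prove all three assertions coordinate-wise: it suffices to fix an arbitrary $\alpha\in A_2(X)$ and check the claimed identity after evaluating both sides at $\alpha$, and in each case the evaluated identity will follow from exhibiting an explicit bijection between the relevant sets of ordered pairs, so that their cardinalities agree. For (i), fix $\alpha$ and recall that $d$ is symmetric, hence $R_\alpha$ is a symmetric relation; then the transposition $(x,y)\mapsto(y,x)$ restricts to a bijection from $(S\times T)\cap R_\alpha$ onto $(T\times S)\cap R_\alpha$, so that $v(S,T)_\alpha=v(T,S)_\alpha$. Since $\alpha$ was arbitrary, $v(S,T)=v(T,S)$.

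For (ii), when $S\cap T=\emptyset$ we have the disjoint decomposition $(S\cup T)\times U=(S\times U)\sqcup(T\times U)$, and intersecting with $R_\alpha$ preserves disjointness. Counting the two pieces separately gives $v(S\cup T,U)_\alpha=v(S,U)_\alpha+v(T,U)_\alpha$ for every $\alpha\in A_2(X)$, which is exactly the asserted identity of vectors.

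For (iii), let $f\colon S\to T$ be a bijection witnessing that $S$ is isometric to $T$, so that $d(x,y)=d(f(x),f(y))$ for all $x,y\in S$. Then $f\times f$ is a bijection from $S\times S$ onto $T\times T$, and for each $\alpha$ the condition $d(x,y)=\alpha$ is equivalent to $d(f(x),f(y))=\alpha$; hence $f\times f$ restricts to a bijection from $(S\times S)\cap R_\alpha$ onto $(T\times T)\cap R_\alpha$, giving $v(S,S)_\alpha=v(T,T)_\alpha$ for all $\alpha$, and therefore $v(S,S)=v(T,T)$.

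I do not expect a genuine obstacle: once one adopts the coordinate-wise viewpoint, all three statements reduce to writing down the obvious bijection and checking it is well defined. The only points requiring a word of care are that the hypothesis $S\cap T=\emptyset$ in (ii) is genuinely needed (otherwise pairs with both coordinates in the overlap are double-counted), and that in (iii) one should first observe $f\times f$ is a bijection of the ambient product sets before restricting it to the fibres $R_\alpha$.
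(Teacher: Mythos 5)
Your proof is correct and follows essentially the same route as the paper: (i) from the symmetry of $R_\alpha$, (ii) from the disjoint decomposition $(S\cup T)\times U=(S\times U)\cup(T\times U)$, and (iii) directly from the definition of isometry, with your explicit bijections merely spelling out details the paper leaves implicit.
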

\begin{proof}
Since $R_\alpha$ is symmetric, (i) follows.

Since $(S\cup T)\times U=(S\times U)\cup (T\times U)$ and $(S\times U)\cap (T\times U)=\emptyset$ if $S\cap T=\emptyset$, we have
\[[(S\cup T)\times U]\cap R_\alpha=[(S\times U)\cap R_\alpha]\cup [(T\times U)\cap R_\alpha],\]
and hence (ii) follows.

(iii) follows from the definition of isometry.
\end{proof}

\begin{lem}\label{lem:SS}
We have the following:
\begin{enumerate}
\item $|\{v(S,S)\mid S\in \binom{X}{k}\}|\leq a_k$;
\item For each $S\in \binom{X}{k-1}$ we have $|\{v(x,S)\mid x\in X\setminus S\}|\leq a_{k}$;
\item For each $\alpha\in A_2(X)$ and $\emptyset\ne S\subseteq X$ there exists $y\in S$ such that
\[v(X\setminus S, y)_\alpha\geq v(X\setminus S,S)_\alpha/|S|.\]
\end{enumerate}
\end{lem}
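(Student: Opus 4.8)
The plan is to prove the three parts in order, each reducing to a counting or averaging argument. For part (i), I would observe that if $S, T \in \binom{X}{k}$ are isometric, then by Lemma~\ref{lem:ST}(iii) we have $v(S,S) = v(T,T)$. Hence the assignment $[S] \mapsto v(S,S)$ is a well-defined map from $A_k(X)$ to the set of integer vectors indexed by $A_2(X)$, and its image is exactly $\{v(S,S) \mid S \in \binom{X}{k}\}$. Since a map from a finite set has image of cardinality at most that of the domain, $|\{v(S,S) \mid S \in \binom{X}{k}\}| \leq |A_k(X)| = a_k$.

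For part (ii), fix $S \in \binom{X}{k-1}$ and, for $x \in X \setminus S$, consider the $k$-subset $S \cup \{x\}$. The key point is that $v(x,S)$ is determined by the isometry class $[S \cup \{x\}]$ together with the isometry class $[S]$: indeed, $v(x,S)_\alpha$ counts the edges of color $\alpha$ between $x$ and $S$, and the multiset of all pairwise distances in $S \cup \{x\}$ splits into the distances within $S$ (recorded by $[S]$, which is fixed) and the distances from $x$ to $S$ (recorded by $v(x,S)$). More carefully: if $S \cup \{x\}$ is isometric to $S \cup \{x'\}$ via a bijection $f$, I would want $f$ to restrict to an isometry of $S$ onto itself so that $v(x,S) = v(x',S)$; this need not hold for an arbitrary witnessing bijection, but one can still argue that the vector $v(x,S)$, as an unordered tally over colors $\alpha \in A_2(X)$, depends only on the isometry type of the pair $(S\cup\{x\}, S)$ up to the action of isometries of $S$ — and in any case the number of distinct such tallies is bounded by the number of isometry classes of $k$-subsets containing an isometric copy of $S$, which is at most $a_k$. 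So the map $x \mapsto [S \cup \{x\}]$ from $X \setminus S$ into $A_k(X)$ has image of size at most $a_k$, and $v(x,S)$ is constant on the fibers of this map up to the subtlety just mentioned; resolving that subtlety cleanly is the one place requiring care, and I expect it is handled by fixing $S$ pointwise and noting that the relevant count $|(\{x\}\times S)\cap R_\alpha|$ is genuinely invariant. Thus $|\{v(x,S) \mid x \in X \setminus S\}| \leq a_k$.

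For part (iii), this is a pure averaging argument with no isometry content. Fix $\alpha \in A_2(X)$ and $\emptyset \neq S \subseteq X$. By Lemma~\ref{lem:ST}(ii) applied repeatedly (decomposing $S$ into singletons), we have
\[
v(X \setminus S, S)_\alpha = \sum_{y \in S} v(X \setminus S, y)_\alpha .
\]
Since this is a sum of $|S|$ nonnegative reals (in fact nonnegative integers), at least one summand is at least the average, i.e.\ there exists $y \in S$ with $v(X \setminus S, y)_\alpha \geq v(X \setminus S, S)_\alpha / |S|$.

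The main obstacle is the bookkeeping in part (ii): making precise the sense in which $v(x,S)$ is an invariant of the enlarged subset. I would handle it by working with the pointwise-fixed copy of $S$ and observing that the color-distribution of edges from a new point to $S$ is literally read off from the distance matrix of $S \cup \{x\}$ with the $S$-block held fixed, so two points $x, x'$ giving isometric $k$-subsets (via a bijection that can be chosen to fix $S$ setwise, after composing with an isometry of $S$) yield the same tally $v(\cdot, S)$; the count of possible tallies is then bounded by $a_k$. Parts (i) and (iii) are routine given Lemma~\ref{lem:ST}.
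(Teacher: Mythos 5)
Parts (i) and (iii) are correct and essentially identical to the paper's proof: (i) is the contrapositive of Lemma~\ref{lem:ST}(iii), and (iii) is the averaging/pigeonhole step using the decomposition $v(X\setminus S,S)_\alpha=\sum_{y\in S}v(X\setminus S,y)_\alpha$ from Lemma~\ref{lem:ST}(ii).

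Part (ii) is where you have a genuine gap, and you have correctly located it but not closed it. Your proposed mechanism is to choose the bijection witnessing the isometry $S\cup\{x\}\to S\cup\{x'\}$ so that it fixes $S$ setwise (``after composing with an isometry of $S$''); but such a bijection need not exist, since a witnessing isometry $f$ may send $x$ into $S$, in which case $f(S)$ contains $x'$ and is not $S$, and there is no reason an isometry of $S\cup\{x'\}$ should carry $f(S)$ back onto $S$. So the sentence ``$v(x,S)$ is constant on the fibers \dots up to the subtlety just mentioned'' is exactly the unproved claim. The paper avoids the issue entirely with a one-line identity from Lemma~\ref{lem:ST}(i),(ii):
\[v(\{x\}\cup S,\{x\}\cup S)=2v(x,S)+v(S,S),\]
so that $v(x,S)=\tfrac{1}{2}\bigl(v(\{x\}\cup S,\{x\}\cup S)-v(S,S)\bigr)$ is a function of the aggregate vector $v(\{x\}\cup S,\{x\}\cup S)$ alone (with $S$, hence $v(S,S)$, fixed); since that aggregate vector takes at most $a_k$ values by part (i), so does $v(x,S)$. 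No statement about how the isometry acts on $S$ is needed. Your conclusion is true, and your instinct that the tally should be recoverable from the distance data of $S\cup\{x\}$ minus that of $S$ is the right one, but the argument must be routed through the subtraction identity rather than through normalizing the witnessing bijection.
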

\begin{proof}
By the contraposition of Lemma~\ref{lem:ST}(iii), if $v(S,S)\ne v(T,T)$,
then $[S]\ne [T]$, and hence (i) follows.

By Lemma~\ref{lem:ST}(i),(ii),
\[v(\{x\}\cup S,\{x\}\cup S)=v(x,x)+v(x,S)+v(S,x)+v(S,S)=2v(x,S)+v(S,S).\]
Since $\{v(\{x\}\cup S,\{x\}\cup S)\mid x\in X\setminus S \}\subseteq \{v(T,T)\mid T\in \binom{X}{k}\}$,
(ii) follows from (i).

(iii) follows from the pigeon hole principle with $v(X\setminus S,S)_\alpha$ pigeons
into $|S|$ halls.
\end{proof}

For a positive integer $k$  with $1\leq k\leq n$ we set
\[M_k=\{\alpha\in A_2(X)\mid \mbox{$v(X\setminus S,S)_\alpha\geq k|S|$ for
some $\emptyset \ne S\subseteq X$}\}.\]
\begin{lem}\label{lem:mk}
We have $M_k=\{\alpha\in A_2(X)\mid \exists x\in X;v(x,X)_\alpha\geq k\}$.
In particular, $\emptyset=M_n\subseteq M_{n-1}\subseteq \cdots\subseteq M_2\subseteq M_1=A_2(X)$.
\end{lem}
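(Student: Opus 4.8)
The plan is to prove the two displayed equalities of sets by mutual inclusion, and then derive the chain of inclusions as an easy consequence. Write $M_k' = \{\alpha\in A_2(X)\mid \exists x\in X;\ v(x,X)_\alpha\geq k\}$ for the set on the right-hand side; I must show $M_k = M_k'$.

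First I would prove $M_k'\subseteq M_k$. This direction is immediate from the definition of $M_k$: if $x\in X$ satisfies $v(x,X)_\alpha\geq k$, then taking the singleton $S=\{x\}$ gives $v(X\setminus S,S)_\alpha = v(X\setminus\{x\},x)_\alpha = v(x,X)_\alpha\geq k = k|S|$ (using that $v(x,x)_\alpha=0$ since $\alpha\ne$ the ``zero distance'', and Lemma~\ref{lem:ST}(i) to symmetrize), so $\alpha\in M_k$. For the reverse inclusion $M_k\subseteq M_k'$, suppose $\alpha\in M_k$, witnessed by a nonempty $S\subseteq X$ with $v(X\setminus S,S)_\alpha\geq k|S|$. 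Apply Lemma~\ref{lem:SS}(iii) to this $\alpha$ and this $S$: there exists $y\in S$ with
\[
v(X\setminus S,y)_\alpha \geq v(X\setminus S,S)_\alpha/|S| \geq k.
\]
Now I need to pass from $v(X\setminus S,y)_\alpha$ to $v(y,X)_\alpha$. By Lemma~\ref{lem:ST}(i) and (ii), writing $X = (X\setminus S)\,\dot\cup\,(S\setminus\{y\})\,\dot\cup\,\{y\}$,
\[
v(y,X)_\alpha = v(X\setminus S,y)_\alpha + v(S\setminus\{y\},y)_\alpha + v(y,y)_\alpha \geq v(X\setminus S,y)_\alpha \geq k,
\]
since all three summands are nonnegative. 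Hence $\alpha\in M_k'$ with witness $x=y$, completing the equality $M_k=M_k'$.

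For the ``in particular'' statement: the inclusions $M_n\subseteq M_{n-1}\subseteq\cdots\subseteq M_1$ are clear from the characterization $M_k=M_k'$, since the condition $v(x,X)_\alpha\geq k$ for some $x$ becomes weaker as $k$ decreases. For the two endpoints, note $v(x,X)_\alpha\leq |X|-1 = n-1 < n$ for every $x$ and every $\alpha$ (as $X\setminus\{x\}$ has $n-1$ elements and $v(x,x)_\alpha=0$), so no $\alpha$ can satisfy $v(x,X)_\alpha\geq n$ and thus $M_n=\emptyset$; on the other hand, for any $\alpha\in A_2(X)$ there is a pair $\{x,y\}$ with $d(x,y)=\alpha$, giving $v(x,X)_\alpha\geq 1$, so $M_1=A_2(X)$.

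I do not anticipate a genuine obstacle here; the only point requiring a little care is the bookkeeping that lets one replace $v(X\setminus S,S)_\alpha$ first by $v(X\setminus S,y)_\alpha$ via the pigeonhole form Lemma~\ref{lem:SS}(iii), and then enlarge it to $v(y,X)_\alpha$ using additivity of $v(\cdot,\cdot)$ over disjoint unions (Lemma~\ref{lem:ST}(ii)) together with the nonnegativity of all the coordinates $v(\cdot,\cdot)_\alpha$. Everything else is definitional.
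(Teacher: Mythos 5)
Your proposal is correct and follows essentially the same route as the paper: the inclusion $M_k'\subseteq M_k$ by taking $S=\{x\}$, and the reverse via the pigeonhole statement Lemma~\ref{lem:SS}(iii). You are in fact slightly more careful than the paper, which silently passes from $v(X\setminus S,y)_\alpha\geq k$ to $v(y,X)_\alpha\geq k$; your explicit decomposition of $X$ and appeal to nonnegativity fills in that small step, and your verification of the endpoints $M_n=\emptyset$ and $M_1=A_2(X)$ is a welcome addition the paper omits.
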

\begin{proof}
Let $\alpha\in M_k$. Then, by Lemma~\ref{lem:SS}(iii), there exists $y\in X$ such that
$v(y,X)_\alpha\geq k$. Conversely, if $v(x,X)_\alpha\geq k$,
then
\[v(X\setminus\{x\},\{x\})_\alpha\geq k,\]
and hence $\alpha\in M_k$.
\end{proof}

\begin{rem}
For each $\alpha\in A_2(X)$, $\alpha\notin M_2$ if and only if $(X,E_\alpha)$ is a matching on $X$,
and $\alpha\alpha\alpha\notin A_3(X)$ if and only if $(X,E_\alpha)$ is triangle-free.
\end{rem}

\begin{lem}\label{lem:major}
For each integer $k$ with $2\leq k\leq n$ we have $|M_{k-1}|\leq  a_k$.
\end{lem}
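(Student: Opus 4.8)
The plan is to construct, for each $\alpha\in M_{k-1}$, a distinguished $k$-subset $T_\alpha$ of $X$, and then to show that the assignment $\alpha\mapsto [T_\alpha]\in A_k(X)$ is injective. Since $|A_k(X)|=a_k$, this immediately gives $|M_{k-1}|\le a_k$. The construction uses only the hypothesis $k\ge 2$ and the description of $M_{k-1}$ provided by Lemma~\ref{lem:mk}.

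First I would unwind $M_{k-1}$: by Lemma~\ref{lem:mk} every $\alpha\in M_{k-1}$ satisfies $v(x_\alpha,X)_\alpha\ge k-1$ for some $x_\alpha\in X$, i.e. $|R_\alpha(x_\alpha)|\ge k-1$. So for each $\alpha\in M_{k-1}$ I fix such an $x_\alpha$, choose any $S_\alpha\subseteq R_\alpha(x_\alpha)$ with $|S_\alpha|=k-1$, and set $T_\alpha=\{x_\alpha\}\cup S_\alpha$; since $x_\alpha\notin R_\alpha(x_\alpha)$ this is a genuine $k$-subset in which the distance from $x_\alpha$ to each of the other $k-1$ points equals $\alpha$. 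It is convenient to call a $k$-subset $Y$ \emph{$\gamma$-centered} if it has a point whose $k-1$ distances to the remaining points of $Y$ are all equal to $\gamma$; being $\gamma$-centered is plainly an isometry invariant, and $T_\alpha$ is $\alpha$-centered.

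The core step is to check that, for $\beta\ne\alpha$, the set $T_\alpha$ is \emph{not} $\beta$-centered, so that (using the isometry invariance) $[T_\alpha]\ne[T_\beta]$. Suppose some $y\in T_\alpha$ witnessed $\beta$-centeredness. If $y=x_\alpha$, then, using that $k-1\ge 1$ so $S_\alpha\ne\emptyset$, picking any $z\in S_\alpha$ yields $\beta=d(x_\alpha,z)=\alpha$, a contradiction. If $y\in S_\alpha$, then taking $z=x_\alpha\in T_\alpha\setminus\{y\}$ yields $\beta=d(y,x_\alpha)=\alpha$, again a contradiction. Hence $T_\alpha$ is $\alpha$-centered but not $\beta$-centered for any $\beta\ne\alpha$, the map $\alpha\mapsto[T_\alpha]$ is injective, and $|M_{k-1}|\le a_k$.

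I expect the only delicate point is keeping the argument uniform in $k$: one must not assume the various $T_\alpha$ are pairwise disjoint or even distinct as subsets of $X$ — in the extreme case $k=n$ one has $T_\alpha=X$ for every $\alpha\in M_{n-1}$. Phrasing everything through the invariant ``$\gamma$-centered'' rather than comparing vertex sets avoids this, since the contradiction above is derived entirely inside the abstract metric space $T_\alpha$. The boundary cases $k=2$ (where $M_1=A_2(X)$) and $k=n$ (where $a_n=1$) then fall out as instances of the same argument, provided one is careful to invoke $k\ge 2$ so that $S_\alpha$ is nonempty. One could alternatively run the construction through Lemma~\ref{lem:SS}(i), comparing the vectors $v(T_\alpha,T_\alpha)$ whose $\alpha$-coordinate is $\ge 2(k-1)$, but isolating that coordinate from the rest requires an extra estimate when $k$ is large, so the invariant formulation seems the cleaner route.
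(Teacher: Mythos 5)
Your proposal is correct and follows essentially the same route as the paper: both arguments attach to each $\alpha\in M_{k-1}$ a star $K_{1,k-1}$ in $(X,E_\alpha)$ (via Lemma~\ref{lem:mk}) and show the resulting $k$-subsets are pairwise non-isometric because an isometry would have to carry the $\alpha$-center to a point of the $\beta$-star all of whose distances within that set equal $\alpha$, which cannot exist. Your packaging of this via the isometry-invariant notion of a ``$\gamma$-centered'' $k$-set is a slightly cleaner rendering of the paper's connectivity/degree argument, but it is the same idea.
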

\begin{proof}
Let $\alpha,\beta\in M_{k-1}$ with $\alpha\ne \beta$.
Then, by Lemma~\ref{lem:mk},
each of $(X,E_\alpha)$ and $(X,E_\beta)$ contains a subgraph isomorphic to
a star with $k-1$ leaves.

We claim that the subsets spanned by the stars isomorphic to $K_{1,k-1}$ in $(X,E_\alpha)$ and $(X,E_\beta)$
are non-isometric.
Suppose the contrary, i.e., $f$ is an isometry from the previous one to the latter one.
Then $f$ moves the vertex of degree $k-1$ to an isolated vertex in $(X,E_\beta)$ since $\alpha\ne\beta$,
which contradicts that $K_{1,k-1}$ is connected.

Therefore, the above claim shows that the elements of $M_{k-1}$ can be embedded into $A_k(X)$.
\end{proof}

\forme{
\begin{lem}\label{lem:empty}
If $M_{k-1}=\emptyset$, then $(k-2)a_2\geq n-1$.
\end{lem}
\begin{proof}
Let $x\in X$. Then $X\setminus\{x\}=\bigcup_{\alpha\in A_2(X)}R_\alpha(x)$.
Since $M_{k-1}=\emptyset$, it follows from Lemma~\ref{lem:mk} that
\[n-1=\sum_{\alpha\in A_2(X)}|R_\alpha(x)|\leq a_2(k-2).\]
\end{proof}
}

\forme{
\begin{lem}\label{lem:mk2}
For each integer $k$ with $2\leq k\leq n$, if $a_k(k-1)^2+k-1\leq n$, then $1\leq |M_{k-1}|$.
\end{lem}
\begin{proof}
Let $S\in \binom{X}{k-1}$.
Then, by Lemma~\ref{lem:SS}(ii),
\[|\{v(S,x)\mid x\in X\setminus S\}|\leq a_k.\]
By the pigeon hall principle, one vector is repeated $\lfloor\frac{n-(k-1)}{a_k}\rfloor$ times
in $\{v(S,x)\mid x\in X\setminus S\}$.
By the assumption,
\[\frac{n-k+1}{a_k}\geq (k-1)^2\geq |S|(k-1).\]
Therefore, the elements of $A_2(X)$ corresponding to the nonzero entries of the repeated vectors are contained in $M_{k-1}$.
\end{proof}
}

\begin{lem}\label{lem:bound}
Let $\gamma\in A_2(X)\setminus M_{k-1}$ and $S\in\binom{X}{k}$ such that
the induced subgraph of $(X,E_\gamma)$ by $S$ contains a spanning forest.
If $k^2-k\leq n$, then the number of edges in the forest is at most $a_k-1$.
\end{lem}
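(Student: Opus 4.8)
The plan is to prove the equivalent bound $a_k\ge m+1$, where $m$ is the number of edges of the spanning forest $F$, by exhibiting $m+1$ pairwise non-isometric $k$-subsets of $X$. Since the $\gamma$-coordinate of $v(T,T)$ equals $2\,|E_\gamma\cap\binom{T}{2}|$, it is an isometry invariant (Lemma~\ref{lem:ST}(iii)), so by Lemma~\ref{lem:SS}(i) it suffices to produce $k$-subsets $T$ realizing $m+1$ distinct values of the number of $\gamma$-edges inside $T$.

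First I would restate the hypotheses. By Lemma~\ref{lem:mk}, $\gamma\notin M_{k-1}$ says exactly that $v(x,X)_\gamma\le k-2$ for all $x$, i.e.\ $(X,E_\gamma)$ has maximum degree at most $k-2$; and, enlarging $F$ by isolated vertices if necessary, I may assume $V(F)=S$, so $m=k-c$ where $c$ is the number of components of $F$. Fix a depth-first ordering $s_1,\dots,s_k$ of $S$ that exhausts the components of $F$ one at a time, put $S_j=\{s_1,\dots,s_j\}$ and $f_j=|E_\gamma\cap\binom{S_j}{2}|$. The $m$ non-root vertices form a set $J\subseteq\{2,\dots,k\}$ with $|J|=m$, and for $j\in J$ the vertex $s_j$ has an $F$-edge — hence a $\gamma$-edge — to an earlier vertex, so $v(s_j,S_{j-1})_\gamma\ge 1$. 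The computation inside Lemma~\ref{lem:SS}(ii) (using Lemma~\ref{lem:ST}(i),(ii)) gives $f_j=f_{j-1}+v(s_j,S_{j-1})_\gamma$; hence $0=f_1\le f_2\le\cdots\le f_k$ is non-decreasing and strictly increases at each of the $m$ indices in $J$, so it attains at least $m+1$ distinct values.

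Finally I would pad each $S_j$ up to a $k$-set without disturbing its $\gamma$-edges: for $j<k$ choose greedily a set $W_j$ of $k-j$ vertices outside $S_j$ that is $\gamma$-independent and has no $\gamma$-edge to $S_j$. Since $(X,E_\gamma)$ has maximum degree $\le k-2$, after $i$ vertices have been chosen at most $(j+i)(k-1)$ vertices are forbidden, so a further choice is possible whenever $(j+i)(k-1)<n$; the tightest case is $j+i=k-1$, requiring $(k-1)^2<n$, which is precisely what $k^2-k\le n$ guarantees. Setting $\widehat S_j=S_j\cup W_j$ for $j<k$ and $\widehat S_k=S$, each $\widehat S_j$ is a $k$-subset with $|E_\gamma\cap\binom{\widehat S_j}{2}|=f_j$, so $\{v(\widehat S_j,\widehat S_j):1\le j\le k\}$ has at least $m+1$ elements, and Lemma~\ref{lem:SS}(i) yields $a_k\ge m+1$, as desired. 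The conceptual heart is the choice of the depth-first ordering, which guarantees $m$ forced increments of the $\gamma$-edge count; the one step needing genuine care is the greedy padding, where the exact hypothesis $k^2-k\le n$ (rather than anything weaker) is what makes the extension go through for every $j$.
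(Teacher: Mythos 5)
Your proof is correct and is essentially the paper's own argument run in the opposite direction: the paper starts from $S$ and strips forest leaves one at a time, replacing each by an outside vertex $x$ with $v(x,\cdot)_\gamma=0$ on the remaining $(k-1)$-set (whose existence is derived from $\gamma\notin M_{k-1}$ and $k^2-k\le n$ by a pigeonhole count rather than your greedy degree count), thereby producing a strictly monotone chain of $m+1$ values of $v(T,T)_\gamma$ and invoking Lemma~\ref{lem:SS}(i) exactly as you do. Your DFS build-up with $\gamma$-independent padding is the mirror image of that leaf-stripping chain, so the two proofs coincide in substance.
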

\begin{proof}
Suppose that the spanning forest has exactly $c$ connected components.
Then $k-c$ equals the number of edges in the spanning forest.
Since every non-null forest has a leaf,
there exist $y_1,y_2,\ldots, y_{k-c}\in S$ such that
$y_i$ is a leaf in the subgraph of $(X,E_\gamma)$ induced by $S\setminus\{y_1,\ldots, y_{i-1}\}$ for each $i$ with $1\leq i\leq k-c$.

We claim that, for each $y\in S$,
there exists $x\in X\setminus S$ such that 
\[v(x,S\setminus\{y\})_\gamma=0.\]
Suppose the contrary, i.e., there exist $y\in S$ such
that $v(x,S\setminus\{y\})_\gamma>0$ for each $x\in X\setminus S$.
By the assumption of $k^2-k\leq n$, we have
\[v(X\cup \{y\}\setminus S,S\setminus\{y\})_\gamma\geq n-k+1\geq (k-1)^2
=|S\setminus\{y\}|(k-1).\]
By Lemma~\ref{lem:SS}(iii), there exists $z\in S\setminus \{y\}$ such that $v(z,X\setminus S)_\gamma\geq k-1$.
It follows from Lemma~\ref{lem:mk} that $\gamma\in M_{k-1}$, a contradiction to the assumption.

Applying the clam inductively for $y_i$ with $Y_i$ we obtain that $x_i\in X$ such that $v(x_i,Y_i\setminus\{y_i\})_\gamma=0$ where
\[Y_0=S, Y_1:=Y_0\cup\{x_1\}\setminus \{y_1\},\ldots, Y_i:=Y_{i-1}\cup \{x_i\}\setminus\{y_i\},\ldots,\]
so that
\[v(Y_0,Y_0)_\gamma >v(Y_1,Y_1)_\gamma>\cdots >v(Y_{k-c},Y_{k-c})_\gamma=0.\]
By Lemma~\ref{lem:SS}(i), we have $k-c+1\leq a_k$.
\end{proof}

\begin{lem}\label{lem:max}
For $S\in \binom{X}{k}$ and $\alpha\in A_2(X)$,
each permutation of $S$ which fixes each element in $\{x\in S\mid v(x,S)_\alpha<k-1\}$ is an isometry.
\end{lem}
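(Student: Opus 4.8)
The plan is to describe explicitly which vertices of $S$ are allowed to move and then verify the isometry condition by a direct case analysis. Put $T=\{x\in S\mid v(x,S)_\alpha=k-1\}$. Since $d(x,x)=0$ while $\alpha\ne 0$, for every $x\in S$ we have $v(x,S)_\alpha\le|S\setminus\{x\}|=k-1$, and hence $\{x\in S\mid v(x,S)_\alpha<k-1\}=S\setminus T$. So a permutation $\pi$ of $S$ that fixes each element of $S\setminus T$ carries $T$ onto $T$: if $x\in T$ had $\pi(x)\in S\setminus T$, then $\pi(\pi(x))=\pi(x)$, which by injectivity of $\pi$ forces $x=\pi(x)\notin T$, a contradiction.

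The key point is that every vertex of $T$ is ``$\alpha$-universal'' inside $S$: if $x\in T$, then $v(x,S\setminus\{x\})_\alpha=k-1=|S\setminus\{x\}|$, which forces $d(x,y)=\alpha$ for all $y\in S\setminus\{x\}$.

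With this in hand, fix $u,v\in S$ and check $d(u,v)=d(\pi(u),\pi(v))$. If $u=v$ this is trivial, and if $u,v\in S\setminus T$ then $\pi(u)=u$ and $\pi(v)=v$, so it is again trivial. Otherwise at least one of $u,v$, say $u$, lies in $T$; then $\pi(u)\in T$ and, $\pi$ being injective, $\pi(v)\in S\setminus\{\pi(u)\}$, so the key point gives $d(u,v)=\alpha=d(\pi(u),\pi(v))$. Therefore $\pi$ is an isometry of $S$.

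I do not anticipate a genuine obstacle: once one observes that the vertices with $v(x,S)_\alpha=k-1$ are exactly those joined to all of $S$ by $R_\alpha$, the argument is a short case split. The only points requiring care are the inequality $v(x,S)_\alpha\le k-1$ (which uses $\alpha\ne 0$) and the remark, already handled above, that a bijection of $S$ fixing $S\setminus T$ pointwise must permute $T$.
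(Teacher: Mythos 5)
Your proof is correct and fills in exactly the ``routine check'' that the paper's proof leaves to the reader: the vertices with $v(x,S)_\alpha=k-1$ are $\alpha$-adjacent to all of $S$, so any permutation fixing the remaining vertices preserves all distances. No issues.
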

\begin{proof}
It is a routine to check that such a permutation satisfies the definition of isometries.
\end{proof}

For $\emptyset\ne \Gamma\subseteq A_2(X)$ we say that $\Gamma$ is \textit{closed}
if, for all $\alpha,\beta\in \Gamma$ and $\gamma\in A_2(X)$,
$\alpha\beta\gamma\in A_3(X)$ implies $\gamma\in  \Gamma$.

\begin{lem}\label{lem:closed}
For $\emptyset\ne \Gamma\subseteq A_2(X)$, $\Gamma$ is closed if and only if
$(X,\bigcup_{\gamma\in \Gamma}E_\gamma)$ is a disjoint union of cliques.
\end{lem}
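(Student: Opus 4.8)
The plan is to reduce the statement to the elementary graph-theoretic fact that a graph is a disjoint union of cliques exactly when it contains no induced path on three vertices, and then to observe that this ``no induced $P_3$'' condition is a verbatim reformulation of closedness. Set $G=(X,\bigcup_{\gamma\in\Gamma}E_\gamma)$; by definition a pair $\{x,y\}$ is an edge of $G$ precisely when $d(x,y)\in\Gamma$.

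For the direction ``disjoint union of cliques $\Rightarrow$ closed'', suppose each connected component of $G$ is a clique. Let $\alpha,\beta\in\Gamma$ and $\gamma\in A_2(X)$ with $\alpha\beta\gamma\in A_3(X)$. Choosing a $3$-subset realizing this class and relabelling its (necessarily distinct) elements, we may take $x,y,z\in X$ with $d(x,y)=\alpha$, $d(y,z)=\beta$ and $d(z,x)=\gamma$. Since $\alpha,\beta\in\Gamma$, both $\{x,y\}$ and $\{y,z\}$ are edges of $G$, so $x$, $y$, $z$ all lie in one component of $G$, which is a clique; hence $\{x,z\}$ is an edge of $G$, i.e.\ $\gamma=d(x,z)\in\Gamma$. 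Thus $\Gamma$ is closed.

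For the converse, suppose $\Gamma$ is closed. It is enough to show that $G$ has no induced $P_3$, that is, for pairwise distinct $x,y,z$ the edges $\{x,y\},\{y,z\}$ force $\{x,z\}$ to be an edge of $G$. Put $\alpha=d(x,y)$ and $\beta=d(y,z)$, both of which lie in $\Gamma$ by hypothesis, and set $\gamma=d(z,x)$. Then $\{x,y,z\}\in\binom{X}{3}$ exhibits $\alpha\beta\gamma\in A_3(X)$, so closedness of $\Gamma$ gives $\gamma\in\Gamma$ and $\{x,z\}\in E_\gamma\subseteq E(G)$. Hence adjacency within each component of $G$ is transitive, each component is a clique, and $G$ is a disjoint union of cliques.

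I do not anticipate a real obstacle. The only thing to handle carefully is the translation between the combinatorial object $\alpha\beta\gamma\in A_3(X)$ and an honest triangle of three distinct points carrying the three prescribed pairwise distances in a prescribed order; this is routine given the identifications of Section~2 (a $3$-subset has exactly three distinct elements, and permuting them permutes the three sides), so one may freely choose which two sides play the role of $\alpha,\beta$. The remaining ingredient — that a $P_3$-free graph equals a disjoint union of cliques — is standard and is dispatched in the line above.
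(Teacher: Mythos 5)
Your proof is correct and is essentially the paper's argument in expanded form: the paper observes that closedness of $\Gamma$ is the same as transitivity of the reflexive symmetric relation $R_0\cup\bigcup_{\alpha\in\Gamma}R_\alpha$, i.e.\ that it is an equivalence relation whose classes are the components of the graph, which is exactly your ``no induced $P_3$'' reformulation. Your extra care with relabelling the vertices of a triple realizing $\alpha\beta\gamma$ is sound and fills in a step the paper leaves implicit.
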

\begin{proof}
Set $R_0=\{(x,x)\mid x\in X\}$.
Then $\Gamma$ is closed if and only if $\bigcup_{\alpha\in \Gamma\cup\{0\}} R_\alpha$ is an equivalence relation on $X$.
Since the equivalence classes correspond to the connected components of
the graph $(X,\bigcup_{\gamma\in \Gamma}E_\gamma)$, the lemma holds.
\end{proof}

\section{What are obtained from isometric sequences}
\begin{thm}\label{thm:a1}
If $a_k=1$ for some $k$ with $2\leq k\leq n-2$, then $a_2=1$.
\end{thm}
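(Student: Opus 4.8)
The plan is to extract from $a_k=1$ a strong regularity condition and then push it down from $k$-subsets all the way to pairs. By Lemma~\ref{lem:SS}(i) the hypothesis $a_k=1$ means that $v(S,S)$ is one and the same vector for every $S\in\binom{X}{k}$. Given a $(k-1)$-subset $T$ and two elements $x,y\in X\setminus T$, both $T\cup\{x\}$ and $T\cup\{y\}$ lie in $\binom{X}{k}$, and Lemma~\ref{lem:ST} gives $v(T\cup\{x\},T\cup\{x\})=v(T,T)+2v(x,T)$ (and likewise for $y$), so we obtain
\[v(x,T)=v(y,T)\qquad\text{for all }T\in\binom{X}{k-1}\text{ and all }x,y\in X\setminus T.\]
This identity is the engine of the whole argument.

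Next, fix distinct $x,y\in X$. For any $a,b\in X\setminus\{x,y\}$, the hypothesis $k\le n-2$ guarantees $|X\setminus\{x,y,a,b\}|\ge k-2$, so we may choose a common core $T_0\in\binom{X\setminus\{x,y,a,b\}}{k-2}$. Applying the displayed identity to $T_0\cup\{a\}$ and to $T_0\cup\{b\}$ and subtracting, Lemma~\ref{lem:ST}(ii) yields $v(x,\{a\})-v(y,\{a\})=v(x,\{b\})-v(y,\{b\})$. Hence the vector $\delta:=v(x,\{a\})-v(y,\{a\})$ does not depend on $a\in X\setminus\{x,y\}$; since $v(x,\{a\})$ and $v(y,\{a\})$ are $0/1$ vectors each with exactly one nonzero coordinate, the coordinates of $\delta$ lie in $\{-1,0,1\}$ and sum to $0$.

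The crux is to show $\delta=0$. If not, then (the coordinates summing to $0$) some coordinate satisfies $\delta_\alpha=1$, which read off for every $a\in X\setminus\{x,y\}$ says $d(x,a)=\alpha$ and $d(y,a)\ne\alpha$ for all such $a$; that is, $X\setminus\{x,y\}\subseteq R_\alpha(x)$ while $R_\alpha(y)\subseteq\{x\}$. But then, picking any $T\in\binom{X\setminus\{x,y\}}{k-1}$ (possible since $n-2\ge k-1$), we would get $v(x,T)_\alpha=|T|=k-1$ and $v(y,T)_\alpha=0$, contradicting the displayed identity. Therefore $\delta=0$, i.e.\ $d(x,a)=d(y,a)$ for every $a\in X\setminus\{x,y\}$; as $x,y$ were arbitrary, the distance from any fixed point $a$ to all the remaining points is a constant $r_a$, and for distinct $a,b$ we get $r_a=d(b,a)=d(a,b)=r_b$. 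Hence all pairwise distances coincide, so $a_2=1$.

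I expect the genuine difficulty to be the middle step: converting the aggregate statement ``$v(S,S)$ is constant over $\binom{X}{k}$'' into information about single pairs requires exactly the slack $k\le n-2$ to build the common core $T_0$, and one must still dispose of the near-degenerate configuration in which $(X,E_\alpha)$ is essentially a star on $X$ — which is precisely what the contradiction with the displayed identity rules out. Everything else is bookkeeping with Lemma~\ref{lem:ST}. (The case $k=2$ is the conclusion itself, and $k\le n-2$ forces $n\ge k+2\ge 4$, so all the subsets constructed above genuinely exist.)
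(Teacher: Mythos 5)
Your proof is correct, and its engine is the same as the paper's: both arguments exploit $a_k=1$ by comparing $v(S,S)$ for $k$-subsets built over a common $(k-2)$-element core (your $T_0$, the paper's $S$), which is exactly where the hypothesis $k\le n-2$ is needed. The differences lie in the packaging and the endgame. The paper argues by contradiction from $a_2>1$: starting from a pair of unequal distances $d(x,y)\ne d(y,z)$ it runs the common-core comparison on a single coordinate $\alpha$ over the four sets $S\cup\{x,y\}$, $S\cup\{y,z\}$, $S\cup\{x,w\}$, $S\cup\{z,w\}$ to force $d(x,w)=\alpha$ and $d(z,w)=\beta$ for every $w$, and then closes by invoking the star-counting Lemma~\ref{lem:major} to get $2\le |M_{k-1}|\le a_k=1$. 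You instead work with the full vector identity $v(x,T)=v(y,T)$ for $(k-1)$-sets $T$, deduce that $\delta=v(x,\{a\})-v(y,\{a\})$ is independent of $a$, and kill the degenerate star configuration by the direct computation $v(x,T)_\alpha=k-1\ne 0=v(y,T)_\alpha$ rather than by citing Lemma~\ref{lem:major}; this makes the argument self-contained modulo Lemmas~\ref{lem:ST} and \ref{lem:SS}, and it proceeds directly to the twin condition $d(x,a)=d(y,a)$ for all $a$, from which $a_2=1$ follows at once. Both routes are sound; yours trades the paper's reliance on the $M_{k-1}$ machinery for a slightly longer but more elementary bookkeeping step.
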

\begin{proof}
Suppose the contrary, i.e., $a_2>1$.
Then there exist distinct $x,y,z\in X$ such that $d(x,y)\ne d(y,z)$.

We claim that $d(x,w)=\alpha$ and $d(w,z)=\beta$ for all $w\in X\setminus\{x,z\}$
where $\alpha:=d(x,y)$ and $\beta:=d(y,z)$.
Let $w\in X\setminus\{x,y,z\}$ and $S\in\binom{X\setminus\{x,y,z,w\}}{k-2}$.
We set
\[S_1:=S\cup\{x,y\}, S_2:=S\cup \{y,z\}, S_3:=S\cup\{x,w\}, S_4:=S\cup\{z,w\},\]
so that $S_j$ are isometric for each other since $a_k=1$ and $|S_j|=k$.
Counting $v(S_j,S_j)_\alpha$ with $j=1,2,3,4$
we obtain, letting $r(u):=v(u,S)_\alpha$ for $u\in \{x,y,z,w\}$, from Lemma~\ref{lem:ST}(ii) that
\[r(x)+r(y)+1=r(y)+r(z)=r(x)+r(w)+v(x,w)_\alpha=r(z)+r(w)+v(z,w)_\alpha.\]
Therefore,
\[r(x)+1=r(z), r(x)+v(x,w)_\alpha=r(z)+v(z,w)_\alpha,\]
and hence,
\[1\leq v(z,w)_\alpha+1=v(x,w)_\alpha\leq 1.\]
This implies $\alpha=xw$, and we obtain from the similar argument that
$zw=\beta$ by symmetry of $\alpha$ and $\beta$.

Applying the claim for an arbitrarily taken $w$
 we have $\alpha,\beta\in M_{k-1}$. But, by Lemma~\ref{lem:major}, $|M_{k-1}|\leq a_k=1$, a contradiction.
\end{proof}

\begin{thm}\label{thm:a2}
If $a_k=2$ for some $k$ with $4\leq k\leq \frac{1+\sqrt{1+4n}}{2}$, then
$a_2=2$ and one of the following holds:
\begin{enumerate}
\item $(X,E_\alpha)\simeq K_n\setminus K_2$ for some $\alpha\in A_2(X)$;
\item $(X,E_\alpha)\simeq K_{1,n-1}$
for some $\alpha\in A_2(X)$;
\end{enumerate}
\end{thm}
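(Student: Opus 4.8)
The plan is to first use the hypothesis $a_k=2$ to pin down $a_2$, and then to determine the two-edge-colored structure of $X$. Since the condition $k\leq\frac{1+\sqrt{1+4n}}{2}$ is equivalent to $k^2-k\leq n$, the results of Section~2 that require this bound --- notably Lemma~\ref{lem:bound} --- are available. First I would establish $a_2=2$: clearly $a_2\geq 2$, since $a_2=1$ would force $a_k=1$. For the reverse inequality, I would argue that $M_{k-1}$ is small. By Lemma~\ref{lem:major} we have $|M_{k-1}|\leq a_k=2$. The key point is to show that in fact $A_2(X)\setminus M_{k-1}$ is ``thin'': if some color $\gamma\notin M_{k-1}$, then by Lemma~\ref{lem:bound} any $k$-subset whose $\gamma$-graph is connected would have a spanning tree with $k-1\leq a_k-1=1$ edges, forcing $k\leq 2$, a contradiction. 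Hence every $\gamma\notin M_{k-1}$ has the property that no $k$-subset induces a connected $E_\gamma$-subgraph; combined with $k^2-k\leq n$ and a counting/pigeonhole argument (in the spirit of the commented-out Lemma~\ref{lem:empty}), this should force all but the colors in $M_{k-1}$ to be essentially absent, yielding $a_2=|M_{k-1}|\leq 2$, so $a_2=2$.

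Next, with $a_2=2$, write $A_2(X)=\{\alpha,\beta\}$, so $(X,E_\alpha)$ and $(X,E_\beta)$ are complementary graphs on $n$ vertices. The goal is to show one of them is $K_{1,n-1}$ (a star) or $K_n\setminus K_2$ (complete minus an edge), equivalently its complement is $K_{n-1}\cup K_1$ or $K_2\cup \overline{K_{n-2}}$ (the disjoint union of an edge and isolated vertices). I would use Lemma~\ref{lem:bound} applied to whichever color $\gamma\in\{\alpha,\beta\}$ lies outside $M_{k-1}$ --- at least one does unless both are in $M_{k-1}$, and I would handle first the case $|M_{k-1}|=1$, say $\beta\notin M_{k-1}$. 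Then by Lemma~\ref{lem:mk}, every vertex has at most $k-2$ neighbors in $E_\beta$; more sharply, Lemma~\ref{lem:bound} says any $k$-set spans at most $a_k-1=1$ edge of $E_\beta$ inside any spanning forest, which should force $(X,E_\beta)$ to be a matching, and then a short argument using $a_3\leq a_k=2$ (via Lemma~\ref{lem:SS} or Lemma~\ref{lem:major} applied with small $k$) rules out having two independent $\beta$-edges, leaving $(X,E_\beta)$ with at most one edge. If it has exactly one edge we land in case (i) with $E_\alpha\simeq K_n\setminus K_2$; the edgeless case is impossible since $a_2=2$.

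The remaining case is $M_{k-1}=\{\alpha,\beta\}$, i.e., both graphs contain a $K_{1,k-1}$. Here I would exploit Lemma~\ref{lem:max}: inside any $k$-set $S$, permuting the vertices that have fewer than $k-1$ neighbors of a given color is an isometry, so the isometry class of $S$ is controlled by the ``high-degree'' structure. Since $a_k=2$, there are only two isometry types of $k$-sets, which severely limits how the degree sequences of the two graphs can interact; I expect this to force one of the graphs to have exactly one vertex of positive degree in the complement, i.e. to be $K_{1,n-1}$, landing in case (ii). The main obstacle is this last case: turning ``only two isometry classes of $k$-subsets'' plus ``both colors are in $M_{k-1}$'' into the rigid conclusion $K_{1,n-1}$ will require carefully choosing a family of $k$-subsets (varying which high-degree and low-degree vertices they contain) and comparing their $v(S,S)$ vectors via Lemma~\ref{lem:SS}(i), much as in the proof of Theorem~\ref{thm:a1}; getting the bookkeeping right so that only these two extremal graphs survive is the delicate part. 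A secondary technical point is ensuring the counting arguments in the $a_2=2$ step are tight enough, since a priori $|M_{k-1}|\leq 2$ only gives $a_2\leq 2+|A_2(X)\setminus M_{k-1}|$, and one must genuinely show the latter set is empty.
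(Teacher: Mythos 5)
Your overall dichotomy (one color outside $M_{k-1}$ versus $M_{k-1}=A_2(X)$) is the same one the paper uses, but there are three genuine gaps. First, the plan for proving $a_2=2$ cannot work as stated: you propose to show that $A_2(X)\setminus M_{k-1}$ is empty, so that $a_2=|M_{k-1}|\leq 2$. But conclusion (i) of the theorem itself refutes this: when $(X,E_\alpha)\simeq K_n\setminus K_2$, the second color $\gamma$ consists of a single edge, so $\gamma\notin M_{k-1}$ (as $k-1\geq 3$), yet $\gamma\in A_2(X)$. The correct intermediate claim, which the paper proves, is $|A_2(X)\setminus M_{k-1}|\leq 1$: for $\gamma\notin M_{k-1}$ one takes $S$ maximizing $v(S,S)_\gamma$, applies Lemma~\ref{lem:bound} to get $|\binom{S}{2}\cap E_\gamma|=1$ and hence $|E_\gamma|=1$ (using $k\geq 4$), and then a two-colors-outside-$M_{k-1}$ scenario produces three non-isometric $k$-sets via Lemma~\ref{lem:SS}(i). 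The nonempty case $A_2(X)\setminus M_{k-1}=\{\gamma\}$ must then be analyzed head-on (the paper shows every $k$-set avoiding the unique $\gamma$-edge $\{x,y\}$ is equilateral, whence $A_2(X\setminus\{x\})$ is a singleton and $(X,E_\alpha)\simeq K_n\setminus K_2$); it cannot be argued away, and in that case $a_2=2$ only comes out of this structural analysis, not from $|M_{k-1}|\leq 2$.

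Second, your appeal to ``$a_3\leq a_k=2$'' is unsupported: the isometric sequence is not monotone (the square has sequence $(1,2,1,1)$), and no such inequality appears in Section~2. The step it is meant to justify (a color outside $M_{k-1}$ spans at most one edge) is true, but needs the direct argument above. Third, and most seriously, the case $M_{k-1}=A_2(X)=\{\alpha,\beta\}$ --- which you correctly identify as the delicate one --- is left essentially open. The paper's argument there is not a degree-sequence bookkeeping via Lemma~\ref{lem:max} alone: it uses $n\geq k(k-1)\geq 2(k+1)$ to find $x$ with $v(x,X)_\alpha\geq k+1$, then combines $a_k=2$ with Lemma~\ref{lem:max} to show all $(k-1)$-subsets of $R_\alpha(x)$ are isometric, invokes Theorem~\ref{thm:a1} to make $R_\alpha(x)$ monochromatic (hence a $k$-clique in one color), and finishes with a maximal-clique argument plus a three-isometry-classes contradiction to force $(X,E_\beta)\simeq K_{1,n-1}$. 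Without some version of this clique construction your sketch does not reach the conclusion.
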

\begin{proof}
Let $\gamma,\delta\in A_2(X)\setminus M_{k-1}$ and $S,T\in \binom{X}{k}$ such that $v(S,S)_\gamma$ is maximal
and $v(T,T)_\delta$ is maximal.
Applying Lemma~\ref{lem:bound} we obtain that
\[\mbox{$|\binom{S}{2}\cap E_\gamma|=1$ and $|\binom{T}{2}\cap E_\delta|=1$.}\]
Since $k\geq 4$, it follows from the maximality of $v(S,S)_\gamma$ and $v(S,S)_\delta$ that
\[\mbox{$|E_\gamma|=1$ and $|E_\delta|=1$.}\]
We claim that $\gamma=\delta$. Otherwise, there exist $U, V,W\in \binom{X}{k}$ such that
\[\mbox{$v(U,U)_\gamma=v(U,U)_\delta=1$, $v(V,V)_\gamma=v(V,V)_\delta=0$ and $v(W,W)_\gamma<v(W,W)_\delta$.}\]
By Lemma~\ref{lem:SS}(i), $3\leq a_k$, a contradiction.
Thus, we conclude from the claim that $|A_2(X)\setminus M_{k-1}|\leq 1$.

Suppose $A_2(X)\setminus M_{k-1}=\{\gamma\}$.
Let $(x,y)\in R_\gamma$.

We claim that $|A_2(Y)|=1$ for each $Y\subseteq X$
unless $x,y\in Y$.
Otherwise, there exist $S,T\in \binom{Y}{k}$ such
that $S$ is not isometric to $T$ by Theorem~\ref{thm:a1}.
Then neither of $S$ nor $T$ is isometric to each element $U\in \binom{X}{k}$ with $x,y\in U$
since $v(U,U)_\gamma=1>v(S,S)_\gamma=v(T,T)_\gamma=0$, which contradicts $a_k=2$.
By the claim and Theorem~\ref{thm:a1}, $A_2(X\setminus\{x\})=A_2(X\setminus\{y\})=\{\alpha\}$ for some $\alpha\in A_2(X)$,
which implies $(X,E_\alpha)\simeq K_n\setminus K_2$.

Suppose $A_2(X)\setminus M_{k-1}=\emptyset$, i.e., $A_2(X)=M_{k-1}$.
Since $1<a_2$ and $|M_{k-1}|\leq a_k=2$ by Lemma~\ref{lem:major},
it follows that $A_2(X)=\{\alpha,\beta\}$ where $\alpha\ne \beta$.
By the assumption, we have $n\ge k(k-1)\ge 2(k+1)$. 
Since $a_2=2$ and $n\ge 2(k+1)$, we may assume that $\alpha\in M_{k+1}$ without loss of generality.
Therefore, there exists $x\in X$ such that $v(x,X)_\alpha \geq k+1$.

We claim that all elements in $\binom{R_\alpha(x)}{k-1}$ are isometric.
Let $A,B\in \binom{R_\alpha(x)}{k-1}$.
Then $A\cup\{x\}, B\cup \{x\}\in \binom{X}{k}$, and they are isometric
by the same argument as in the proof of Lemma~\ref{lem:major} with $a_k=2$.
So, there exists a bijection $f:A\cup\{x\}\to B\cup \{x\}$ which preserve the metric function.
By Lemma~\ref{lem:max}, there exists an isometry $g$ on $B\cup\{x\}$ which moves $f(x)$ to $x$. 
Since the composite $gf$ is also an isometry which fixed $x$,
$A$ is isometric to $B$.

By the claim, $|A_{k-1}(R_\alpha(x))|=1$. Applying Theorem~\ref{thm:a1} for the sub-metric space $R_\alpha(x)$
we obtain $|A_{2}(R_\alpha(x))|=1$, i.e.,
\[\mbox{$A_{2}(R_\alpha(x))=\{\alpha\}$ or $A_{2}(R_\alpha(x))=\{\beta\}$.}\]
Thus, a clique of size $k$ exists in $(X,E_\alpha)$ or $(X,E_\beta)$.
Without loss of generality we may assume that $(X,E_\alpha)$ contains a clique of size $k$,
so that $S$ induces a clique of size $k$ whenever $S\in \binom{X}{k}$ induces a connected subgraph of $(X,E_\alpha)$.

Let $Y$ be a clique of maximal size in $(X,E_\alpha)$.
If $(y,z)\in R_\alpha$ for some $y\in Y$ and $z\in X\setminus Y$ and $T\in \binom{Y}{k-1}$ with $y\in T$,
then $T\cup \{z\}$ induces a clique in $(X,E_\alpha)$ by what we obtained in the last paragraph.
Since $T$ is arbitrarily taken, it follows that $Y\cup\{z\}$ is a clique in $(X,E_\alpha)$, which contradicts
the maximality of $|Y|$.
Therefore, every ordered pair in $Y\times(X\setminus Y)$ is belonged to $R_\beta$.

Finally, we show that $|X\setminus Y|=1$, which implies that $(X,E_\beta)\simeq K_{1,n-1}$.
Suppose the contrary, i.e., $x,z\in X\setminus Y$ with $x\ne z$.
Let $Y_0\in  \binom{Y}{k}$, $Y_1\in\binom{Y}{k-1}$ and $Y_2\in \binom{Y}{k-2}$.
Then
\[\mbox{$Y_0$, $Y_1\cup\{x\}$, $Y_2\cup\{x,z\}\in \binom{X}{k}$}\]
 such that they are mutually non-isometric
since
\[v(Y_0,Y_0)_\alpha>v(Y_1\cup\{x\},Y_1\cup\{x\})_\alpha>v(Y_2\cup \{x,z\},Y_2\cup\{x,z\})_\alpha.\]
This is a contradiction to $a_k=2$ by Lemma~\ref{lem:SS}(i).
\end{proof}

For the remainder of this section we assume that 
\[a_3\leq 3, \,  5\leq n.\]

\begin{lem}\label{lem:xyz}

For all $x,y,z\in X$ with $|\{xy,yz,zx\}|=3$ and each $u\in X\setminus \{x,y,z\}$
we have the following:
\begin{enumerate}
\item At least one of $\{uxy,uyz,yzx\}$ equals $xyz$;
\item $\{ux,uy,uz\}\subseteq \{xy,yz,zx\}$;
\item $A_2(X)=\{xy,yz,zx\}$;
\item $a_3=3$.
\end{enumerate}
\end{lem}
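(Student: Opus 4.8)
The plan is to prove (i)--(iv) roughly in that order, deducing (iii) and (iv) quickly once (ii) is available. Throughout I write $\alpha=xy$, $\beta=yz$, $\gamma=zx$ (these are pairwise distinct by hypothesis) and $p=ux$, $q=uy$, $r=uz$, and I identify a triangle with the multiset of its three side lengths; thus the four triangles on $\{x,y,z,u\}$ have side-multisets $T_0=\{\alpha,\beta,\gamma\}$, $T_1=uxy=\{p,q,\alpha\}$, $T_2=uyz=\{q,r,\beta\}$, $T_3=uzx=\{r,p,\gamma\}$, and two triangles are isometric exactly when these agree. For (i) I use that $a_3\le3$ forces two of $T_0,T_1,T_2,T_3$ to coincide: if $T_0=T_i$ for some $i\in\{1,2,3\}$ we are done, and otherwise two of $T_1,T_2,T_3$ coincide, say $T_1=T_2$, in which case cancelling the common entry $q$ from $\{p,q,\alpha\}=\{q,r,\beta\}$ and using $\alpha\neq\beta$ gives $p=\beta$, $r=\alpha$, whence $T_3=\{r,p,\gamma\}=T_0$; the cases $T_2=T_3$ and $T_3=T_1$ follow from this by the cyclic relabelling $x\mapsto y\mapsto z\mapsto x$, which sends $(\alpha,\beta,\gamma,p,q,r)$ to $(\beta,\gamma,\alpha,q,r,p)$ and cycles $T_1\mapsto T_2\mapsto T_3$. (Note (i) uses only $a_3\le3$, not $n\ge5$.)

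For (ii) I argue by contradiction: if it fails, then by the cyclic symmetry above we may assume $r=uz\notin\{\alpha,\beta,\gamma\}$, and since $T_2=T_0$ would give $\{q,r\}=\{\alpha,\gamma\}$ and $T_3=T_0$ would give $\{r,p\}=\{\alpha,\beta\}$, (i) forces $T_1=T_0$, i.e.\ $\{p,q\}=\{\beta,\gamma\}$. If $p=\gamma$, $q=\beta$, then $uxz$ and $uyz$ already exhibit the classes $\{\gamma,\gamma,r\}$ and $\{\beta,\beta,r\}$, so $A_3(X)=\{T_0,\{\beta,\beta,r\},\{\gamma,\gamma,r\}\}$; I would then bring in a fifth point $w$ (available since $n\ge5$), force every triangle through $w$ into this $3$-element list --- which pins $(wx,wy,wz)$ to $(\beta,\gamma,\alpha)$ or $(\gamma,\beta,r)$ with $wu$ equal to $\alpha$ or $r$ accordingly --- and observe that the triangle $wuz$ then has side-multiset $\{\alpha,\alpha,r\}$ or $\{r,r,r\}$, which lies in none of the three allowed classes. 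If $p=\beta$, $q=\gamma$, the classes visible on $\{x,y,z,u\}$ are only $T_0$ and the scalene class $T'=\{\beta,\gamma,r\}$, and I would again pass to a fifth point $w$ and apply (i) to each of the scalene triangles $xyz$, $uxy$, $uxz$, $uyz$ against $w$; this confines $(wx,wy,wz,wu)$ to a short finite list, on each item of which some triangle through $w$ (one of $wxy$, $wux$, $wuy$, $wuz$) is forced to have a side-multiset of a shape such as $\{\beta,\beta,r\}$, $\{\gamma,\gamma,\gamma\}$ or $\{r,r,\ast\}$, and a multiplicity count --- no isometry class in $A_3(X)$ can contain both $\beta$ twice and $r$ twice, nor realise two incompatible such shapes at once --- contradicts $a_3\le3$. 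Carrying out this last enumeration, tracking the finitely many positions the fifth point can occupy, is the hard part: it is elementary but long, and it is exactly where $n\ge5$ enters.

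Once (ii) is established, (iii) and (iv) should come cheaply. For (iii): by (i) each $u\in X\setminus\{x,y,z\}$ lies in a triangle with two of $x,y,z$ --- say $\{u,x,y\}$ --- isometric to $xyz$, hence scalene with side set $\{\alpha,\beta,\gamma\}$; applying (ii) to this scalene triangle gives $\{vu,vx,vy\}\subseteq\{\alpha,\beta,\gamma\}$ for all $v\in X\setminus\{u,x,y\}$, so every distance at $u$ lies in $\{\alpha,\beta,\gamma\}$; since each pair from $X$ lies in $\{x,y,z\}$ or meets $X\setminus\{x,y,z\}$, this yields $A_2(X)\subseteq\{\alpha,\beta,\gamma\}$, and the reverse inclusion is immediate from $xyz$ being scalene. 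For (iv), only $a_3\ge3$ needs proof: choose $u\in X\setminus\{x,y,z\}$ with $uxy\cong xyz$ by (i) (the other options for $u$ being analogous), so $\{ux,uy\}=\{\beta,\gamma\}$ and $uz\in\{\alpha,\beta,\gamma\}$ by (iii), and split on the order of the pair $\{ux,uy\}$: if $ux=\gamma$, $uy=\beta$ then $uxz$ and $uyz$ have side-multisets $\{\gamma,\gamma,uz\}$ and $\{\beta,\beta,uz\}$, which together with $T_0$ are three distinct classes; if $ux=\beta$, $uy=\gamma$ I bring in a fifth point $w$ with $wxy\cong xyz$ (its other options being analogous) and note that, whichever order $\{wx,wy\}=\{\beta,\gamma\}$ takes, two of the triangles $wxz$, $wyz$, $wxu$, $wyu$ are distinct isosceles classes different from $T_0$. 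Either way $a_3\ge3$, hence $a_3=3$.
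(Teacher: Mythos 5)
Your proof of (i) is correct and in fact cleaner than the paper's: rather than deriving a contradiction from $xyz\notin\{uxy,uyz,uzx\}$ by a case split on where $\beta$ sits in $uxy$, you observe that if two of the triangles $T_1,T_2,T_3$ through $u$ coincide, say $T_1=T_2$, then cancelling $q$ from $\{p,q,\alpha\}=\{q,r,\beta\}$ and using $\alpha\ne\beta$ forces $p=\beta$, $r=\alpha$, hence $T_3=T_0$ outright; the cyclic relabelling disposes of the other pairs. Parts (iii) and (iv) are essentially the paper's arguments and go through (in (iv) the claim that the ``other options'' for $w$ are analogous is not literally a symmetry of the configuration once $u$ has been fixed with $(ux,uy)=(\beta,\gamma)$, since only the transposition $x\leftrightarrow y$, $\beta\leftrightarrow\gamma$ survives, but the missing verification for $wyz\cong xyz$ is short). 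Your treatment of the first sub-case of (ii), where $(ux,uy)=(\gamma,\beta)$, is complete and matches the paper's.

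The genuine gap is in (ii), in the sub-case $(ux,uy)=(\beta,\gamma)$ with $r=uz\notin\{\alpha,\beta,\gamma\}$ --- which is exactly where the paper spends the bulk of its proof, and you explicitly defer it as ``elementary but long.'' Note that at this point only two classes are visible on $\{x,y,z,u\}$, namely $T_0=\alpha\beta\gamma$ and $T'=\beta\gamma r$ (indeed $uxy=xyz=T_0$ and $uxz=uyz=T'$), so $a_3\le 3$ still leaves room for one further class; the contradiction therefore cannot come from a single triangle through $w$ having a forbidden shape, nor from a bare ``multiplicity count,'' but must consist of exhibiting two new classes, i.e.\ four pairwise distinct elements of $A_3(X)$. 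The paper does this by applying (i) only to $xyz$ against the fifth point $v$, splitting into $xyz\in\{vxy,vyz,vzx\}$ and then into the two orientations of each, and in each of the six resulting cases writing down four pairwise distinct classes (e.g.\ $vxz=\gamma\gamma\mu$, $vyz=\beta\beta\mu$, $xyz=\alpha\beta\gamma$, $uxz=\beta\gamma r$ with $\mu=vz$). Your plan of constraining $(wx,wy,wz,wu)$ by applying (i) to all four scalene triangles on $\{x,y,z,u\}$ would also work, but until that enumeration is actually carried out, (ii) is not proved --- and (iii) and (iv) depend on it.
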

\begin{proof}
Let $x,y,z\in X$ such that
\[\mbox{$\alpha:=d(x,y)$, $\beta:=d(y,z)$ and $\gamma:=d(z,x)$ are distinct.}\]

(i) Suppose that (i) does not hold, i.e.,
\begin{equation}\label{eq:100}
xyz \notin \{uxy, uyz,uzx\}.
\end{equation}
Then $uxy,uyz,uzx\in A_3(X)\setminus\{xyz\}$. Since $a_3\leq 3$, at least two of them equal.
By symmetry we may assume that $uxy=uyz$ without loss of generality.
Since $\alpha,\beta \in A_2(\{u,x,y\})=A_2(\{u,y,z\})$ and $\alpha\ne \beta$,
we have $\beta\in \{ux,uy\}$.

If $\beta=ux$, then $uz \ne \alpha$ since $xyz\ne xuz$ by (\ref{eq:100}),
and hence $uy=\alpha$ since $uxy=uyz$.
This implies that $uxy=\alpha\alpha\beta$, and hence, $uz=\alpha$, a contradiction.

If $\beta=uy$, then $uz=\alpha$ since $uxy=uyz$ and $xy=\alpha$, and hence $ux=\beta$ since $uxy=uyz=\alpha\beta\beta$.
This implies that $uxz=\alpha\beta\gamma=xyz$, a contradiction to (\ref{eq:100}).

(ii) By (i), $xyz\in  \{uxy, uyz,uzx\}$. By symmetry, it suffices to show (ii) under the assumption of $xyz=uxy$.
Then
\begin{equation}\label{eq:15}
(ux,uy)\in \{(\gamma,\beta),(\beta,\gamma)\}.
\end{equation}
Let $v\in X\setminus\{x,y,z,u\}$.
For short we put
\[\mbox{$\delta:=uz$ and $uv=\epsilon$.}\]
Now we will prove $\delta\in\{\alpha,\beta,\gamma\}$.
Suppose not, i.e.,
\begin{equation}\label{eq:20}
\delta\notin\{\alpha,\beta,\gamma\}. 
\end{equation}
If $(ux,uy)=(\gamma,\beta)$,
then $A_3(X)=\{\alpha\beta\gamma,\gamma\gamma\delta,\beta\beta\delta\}$ since $a_3\leq 3$ and $|\{\alpha,\beta,\gamma,\delta\}|=4$.
Since $\{xyz\}=\{[Y]\in A_3(X)\mid \alpha\in A_2(Y)\}$,
$(vx,vy)\in \{(\gamma,\beta),(\beta,\gamma)\}$.
In the former case we have $\delta=\epsilon$ and
in the latter case we have $uv=\alpha$, each of the cases induces a contradiction that
\[uvz\notin \{\alpha\beta\gamma,\gamma\gamma\delta,\beta\beta\delta\}.\]
Therefore,
we have
\[(ux,uy)=(\beta,\gamma). \]
Applying (i) for $xyz$ with $v$ we obtain
\[xyz\in  \{vxy, vyz,vzx\}.\]

We will show a contradiction for each case of $xyz\in  \{vxy, vyz,vzx\}$ in the following three paragraphs:

If $xyz=vxy$, then
\[(vx,vy)\in \{(\gamma,\beta),(\beta,\gamma)\}.\]
In the former case, putting $\mu:=vz$
\[\mbox{$vxz=\gamma\gamma\mu$, $vyz=\beta\beta\mu$, $xyz=\alpha\beta\gamma$ and $uxz=\beta\gamma\delta$}\]
are distinct elements of $A_3(X)$, a contradiction to $a_3\leq 3$.
In the latter case
\[\mbox{$uvy=\gamma\gamma\epsilon$, $uvx=\beta\beta\epsilon$, $xyz=\alpha\beta\gamma$ and $uxz=\beta\gamma\delta$}\]
are distinct elements of $A_3(X)$ by (\ref{eq:20}), a contradiction to $a_3\leq 3$.

If $xyz=vyz$,
then
\[(vz,vy)\in \{(\gamma,\alpha),(\alpha,\gamma)\}.\]
In the former case, putting $\nu:=d(v,x)$
\[\mbox{$vxz=\gamma\gamma\nu$, $vxy=\alpha\alpha\nu$, $xyz=\alpha\beta\gamma$ and $uxz=\beta\gamma\delta$}\]
are distinct elements of $A_3(X)$, a contradiction to $a_3\leq 3$.
In the latter case
\[\mbox{$uvy=\gamma\gamma\epsilon$, $uvz=\alpha\delta\epsilon$, $xyz=\alpha\beta\gamma$ and $uxz=\beta\gamma\delta$}\]
are distinct elements of $A_3(X)$, a contradiction to $|A_3(X)|\leq 3$.

If $xyz=vzx$,
then, by symmetry of $\beta$ and $\gamma$, we can get a contradiction similar to the case of $xyz=vyz$.

Therefore, (ii) holds.

(iii) Let $u,v\in X$ with $\epsilon:=uv\notin \{\alpha,\beta,\gamma\}$.
Then, by (ii), $\{u,v\}\cap \{x,y,z\}=\emptyset$.
By (i), $xyz\in \{uxy,uyz,yzx\}$.
Applying (ii) for $uxy$, $uyz$ and $uzx$ with $v\in X$ we obtain
$uv\in \{\alpha,\beta,\gamma\}$, a contradiction.
Therefore, $\epsilon \in \{\alpha,\beta,\gamma\}$.
Since $u,v$ are arbitrarily taken, (iii) holds.

(iv) Since we assume that $a_3\leq 3$ and $a_2\geq 3$, it follows from Theorem~\ref{thm:a1} that
$a_3\in\{2,3\}$. Suppose $a_3=2$.
Then, by (i), we may assume that $xyz=uxy$.
Then $(ux,uy)\in \{(\beta,\gamma), (\gamma,\beta)\}$.
But, the latter case does not occur, otherwise $uyz,uxz,xyz$ are distinct elements in $A_3(X)$, a contradiction to $a_2=2$.
Therefore, $ux=\beta$ and $uy=\gamma$.

On the other hand, by (i), $xyz\in \{vxy,vyz,vzx\}$.
But, $xyz\ne vxy$, otherwise we have a contradiction to $a_3=2$ by a similar argument given in the last paragrpah.
By symmetry we may assume that $xyz=vyz$.
Then $vz=\alpha$ and $vy=\gamma$.
Let $\delta:=uv$. Then $A_3(X)=\{\alpha\beta\gamma, \gamma\gamma\delta\}$.
If $zu=\alpha$, then $uzv=\alpha\alpha\delta\notin \{\alpha\beta\gamma, \gamma\gamma\delta\}$, a contradiction.
Therefore, $uz=\gamma$ and $\beta=\delta$, and hence $vx=\beta$,
which implies  $xuv=\beta\beta\beta\notin \{\alpha\beta\gamma, \gamma\gamma\delta\}$, a contradiction.
\end{proof}

\begin{lem}\label{lem:basic2}
Suppose that $|A_2(Y)|\leq 2$ for each $Y\in \binom{X}{3}$, i.e.,
every element of $A_3(X)$ is an isosceles triangle.
For all $x,y,z\in X$ with $xy\ne yz=zx$ and each $u\in X\setminus\{x,y,z\}$ we have the following:
\begin{enumerate}
\item If $ux\notin \{xy,xz\}$, then $A_2(X)=\{ux,xy,yz\}$ and $a_3=3$;
\item If $uy\notin \{xy,yz\}$, then $A_2(X)=\{uy,xy,yz\}$ and $a_3=3$;
\item If $uz\notin \{xy,yz\}$, then $A_2(X)=\{uz,xy,yz\}$ and $a_3=3$.
\end{enumerate}
\end{lem}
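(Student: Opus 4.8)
The plan parallels the strategy behind Lemma~\ref{lem:xyz} (which settles the case with a scalene triangle), but here $X$ has \emph{no} scalene triangle, so everything is driven by the ``isosceles'' constraint on the triangles sitting inside $\{x,y,z,u\}$. Write $\alpha:=xy$ and $\beta:=yz=zx$, so $\alpha\ne\beta$, and set $\gamma:=ux$ in case~(i) and $\gamma:=uz$ in case~(iii); by hypothesis $\gamma\notin\{\alpha,\beta\}$, so $\alpha,\beta,\gamma$ are three distinct values of $d$ and $a_2\ge 3$. Since $n\ge 5$, Theorem~\ref{thm:a1} rules out $a_3=1$, so with the standing hypothesis $a_3\le 3$ we have $a_3\in\{2,3\}$. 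The structural point I would exploit throughout is that, as $n\ge 3$, every $2$-subset of $X$ lies in some $3$-subset; hence once $A_3(X)$ is shown to involve only $\alpha,\beta,\gamma$, it follows automatically that $A_2(X)=\{\alpha,\beta,\gamma\}$ and $a_3=|A_3(X)|=3$. So the whole problem reduces to determining $A_3(X)$.

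For~(i): because $uxy$ and $uxz$ are isosceles and $ux=\gamma$ differs both from $xy=\alpha$ and from $xz=\beta$, we must have $uy\in\{\alpha,\gamma\}$ and $uz\in\{\beta,\gamma\}$, and $(uy,uz)=(\alpha,\gamma)$ is impossible, since then $uyz$ would have side-lengths $\alpha,\beta,\gamma$ and so be scalene. In each of the three surviving cases $(uy,uz)\in\{(\gamma,\gamma),(\gamma,\beta),(\alpha,\beta)\}$, a one-line check shows the four triangles inside $\{x,y,z,u\}$ realize exactly three isometry classes, all with side set contained in $\{\alpha,\beta,\gamma\}$ --- respectively $\{\alpha\beta\beta,\gamma\gamma\alpha,\gamma\gamma\beta\}$, $\{\alpha\beta\beta,\gamma\gamma\alpha,\gamma\beta\beta\}$, $\{\alpha\beta\beta,\gamma\alpha\alpha,\gamma\beta\beta\}$ --- and since $a_3\le 3$ this set is all of $A_3(X)$. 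Thus $A_2(X)=\{\alpha,\beta,\gamma\}=\{ux,xy,yz\}$ and $a_3=3$. Case~(ii) is case~(i) after interchanging $x$ and $y$ (legitimate since $xz=yz$), so it requires nothing new.

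For~(iii) the same opening move, applied to $uxz$ and $uyz$, gives $ux,uy\in\{\beta,\gamma\}$, while $\{ux,uy\}=\{\beta,\gamma\}$ is impossible because then $uxy$ would be scalene; hence $ux=uy=\gamma$ or $ux=uy=\beta$. If $ux=uy=\gamma$, the four triangles inside $\{x,y,z,u\}$ realize the three classes $\{\alpha\beta\beta,\gamma\gamma\beta,\gamma\gamma\alpha\}$ and we finish as in~(i). The genuine obstacle is the apex case $ux=uy=\beta$: here the four triangles inside $\{x,y,z,u\}$ realize only $\alpha\beta\beta$ and $\gamma\beta\beta$, so $A_3(X)$ is \emph{not} pinned down by $\{x,y,z,u\}$ alone, and a fifth vertex (available since $n\ge 5$) must be used. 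I would close this case in two steps. Step~1, that $A_2(X)=\{\alpha,\beta,\gamma\}$: if $d(p,q)=\delta\notin\{\alpha,\beta,\gamma\}$, then $p,q\notin\{x,y,z,u\}$ (all pairwise distances there lie in $\{\alpha,\beta,\gamma\}$), and since neither $\alpha\beta\beta$ nor $\gamma\beta\beta$ uses $\delta$ while $a_3\le 3$, there is a unique class $\tau\in A_3(X)$ using $\delta$, and every triple $pqv$ with $v\in\{x,y,z,u\}$ has class $\tau$; so $A_3(X)=\{\alpha\beta\beta,\gamma\beta\beta,\tau\}$. As $\tau$ is isosceles it has the form $\delta\delta\delta$, $\delta\delta\mu$, or $\delta\mu\mu$, and in each sub-case, forcing the class of a suitably chosen triple $pvv'$ with $v,v'\in\{x,y,z,u\}$ --- using $d(x,y)=\alpha$, $d(z,u)=\gamma$ and $d(x,z)=\beta$ --- produces a triangle whose isometry class can be none of the (at most three) classes in $A_3(X)$, a contradiction; hence no such $\delta$ exists. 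Step~2, that $a_3=3$: if $a_3=2$ then $A_3(X)=\{\alpha\beta\beta,\gamma\beta\beta\}$; since $\gamma$ occurs only in $\gamma\beta\beta$ we get $d(v,z)=d(v,u)=\beta$ for every $v\notin\{z,u\}$, and likewise $d(v,x)=d(v,y)=\beta$ for every $v\notin\{x,y\}$, so taking $w\in X\setminus\{x,y,z,u\}$ the triangle $wxz$ is $\beta\beta\beta\notin A_3(X)$, a contradiction.

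The routine parts are the enumerations of which three isometry classes appear inside $\{x,y,z,u\}$ in each case, and the short case split on the shape of $\tau$. I expect the one genuinely delicate point to be the apex sub-case $ux=uy=\beta$ of~(iii): that four-point configuration is ``self-similar'', in that every isosceles triangle sitting in it, enlarged by the fourth vertex, reproduces exactly this same sub-case, so it cannot be resolved inside $\{x,y,z,u\}$ and one is forced to invoke a fifth vertex together with the bound $a_3\le 3$.
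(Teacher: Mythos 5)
Your proof is correct, and for parts (i) and (ii) it is essentially the paper's argument: exhibit three pairwise non-isometric triangles inside $\{x,y,z,u\}$ whose sides all lie in $\{\alpha,\beta,\gamma\}$ (where $\alpha=xy$, $\beta=yz=zx$, $\gamma=ux$), use $a_3\le 3$ to conclude these exhaust $A_3(X)$, and read off $A_2(X)$ from the side lengths occurring in $A_3(X)$. The divergence is in part (iii), specifically in the apex sub-case $ux=uy=\beta$ (with $\gamma=uz$), which you correctly identify as the only place a fifth point is forced. The paper handles it forwards: it first uses (i) and (ii) to confine the distances from a fifth point $v$ to $\{x,y\}$ within $\{\alpha,\beta\}$ and from $v$ to $\{z,u\}$ within $\{\beta,\gamma\}$, and then in each of the finitely many resulting cases exhibits a third triangle class inside $\{x,y,z,u,v\}$ using only $\alpha,\beta,\gamma$, which pins down $A_3(X)$ and hence $A_2(X)$ at once. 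You instead argue by contradiction from a hypothetical fourth value $\delta$, force the third class $\tau$ to contain $\delta$, and eliminate each isosceles shape of $\tau$ by producing a forbidden triple $pvv'$; you then dispose of $a_3=2$ separately via the triangle $wxz=\beta\beta\beta$. Both routes work and are of comparable length; the paper's never leaves a five-point configuration, while yours isolates more transparently why a fourth distance value is impossible. One small imprecision: from ``all pairwise distances within $\{x,y,z,u\}$ lie in $\{\alpha,\beta,\gamma\}$'' you may only conclude that \emph{not both} of $p,q$ lie in $\{x,y,z,u\}$, not that neither does. This is harmless --- your contradiction only uses the triples $pvv'$ for whichever endpoint lies outside $\{x,y,z,u\}$, and if one endpoint is inside, two triangles through the $\delta$-edge already force $\tau$ to contain three distinct values --- but the claim as written should be weakened or given that extra line.
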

\begin{proof}
For short we put
\[\mbox{$\alpha:=d(x,y)$, $\beta:=d(y,z)$ and $\gamma:=d(u,x)$.}\]

(i) Suppose $\gamma\notin \{\alpha,\beta\}$ and put $\mu:=uy$ and $\nu:=uz$.
Then $uxy=\alpha\gamma\mu$, $uxz=\beta\gamma\nu$ and $xyz=\alpha\beta\beta$ are distinct elements
of $A_3(X)$ since $\mu\in \{\alpha,\gamma\}$ and $\nu\in \{\beta,\gamma\}$ by the assumption.
Therefore, $A_2(X)=\{\alpha,\beta,\gamma\}$.

(ii) By symmetry the proof of (ii) is parallel to that of (i).

(iii) We claim that, for all distinct $x_1,x_2,x_3,x_4\in X$,
if $x_1x_2\ne x_1x_3=x_2x_3$, then we may assume that $x_4x_1, x_2x_4\in \{x_1x_2,x_2x_3\}$,
otherwise, $A_2(X)=\{x_1x_2,x_2x_3, x_4x_1\}$ by (i) and (ii), and so (iii) holds.

Suppose $\gamma:=uz\notin \{\alpha,\beta\}$ and $v\in X\setminus \{x,y,z,u\}$.
By the above claim, $ux,uy,vx,vy\in \{\alpha,\beta\}$. On the other hand, $ux,uy\in \{\beta,\gamma\}$ since both of $uxz$ and $uzy$
should be isosceles. Therefore, $ux=uy=\beta$.
By the above claim, $uv,zv\in \{\beta,\gamma\}$.
If $uzv\in \{\gamma\gamma\gamma,\gamma\gamma\beta\}$, then
$xyz=\beta\beta\alpha$, $xuz=\beta\beta\gamma$ and $uzx$ are distinct elements of $A_3(X)$,
and hence $A_2(X)=\{\alpha,\beta,\gamma\}$ as desired.
Thus, we may assume that $vu=vz=\beta$.
Recall that $xv,yv\in \{\alpha,\beta\}$.
If $xv=\beta$, then $A_3(X)=\{\beta\beta\alpha, \beta\beta\gamma, \beta\beta\beta\}$, and hence
$A_2(X)=\{\alpha,\beta,\gamma\}$ as desired.
Similarly, if $yv=\beta$, then we have $A_2(X)=\{\alpha,\beta,\gamma\}$.
Therefore, we may assume that $xv=yv=\alpha$.
Then we have the same conclusion since $xyz=\alpha\alpha\alpha$.
This completes the proof.
\end{proof}

\forme{
\begin{thm}\label{thm:a3}
If $a_3=2$ and $n\geq 5$, then $a_2=2$, and for some $\alpha\in A_2(X)$
$(X,E_\alpha)$ is isomorphic to one of the following:
\begin{enumerate}
\item a complete bipartite graph;
\item the complement of a matching on $X$;
\item the pentagon.
\end{enumerate}
\end{thm}
\begin{proof}
By Lemma~\ref{lem:major}, $|M_2|\leq a_3=2$.

First we assume that $M_k=\{\alpha,\beta\}$ where $\alpha\ne \beta$.
Then $A_3(X)$ forms $\{\alpha\alpha\gamma, \beta\beta\delta\}$ for some $\gamma,\delta\in A_2(X)$ where
$\gamma$ and $\delta$ are not necessarily distinct.

We claim that $\gamma\in \{\alpha,\beta\}$ or $\gamma=\delta$.
Suppose the contrary, i.e., $\gamma\notin \{\alpha,\beta\}$ and $\gamma\ne \delta$.
Then, for $(x,y)\in R_\gamma$ and all distinct $u_1,u_2,u_3\in X\setminus\{x,y\}$
we have $xu_i=yu_i=\alpha$ for $i=1,2,3$, and hence, $u_1u_2=u_2u_3=u_3u_1=\gamma$.
Therefore, $\gamma\gamma\gamma\in A_3(X)$, which contradicts $a_3=2$ since $\gamma\notin \{\alpha,\beta\}$.

By the symmetric argument between $\gamma$ and $\delta$ we claim that
$\delta\in \{\alpha,\beta\}$ or $\gamma=\delta$.

We claim that $\gamma,\delta\in \{\alpha,\beta\}$.
Otherwise, by the above claims, $\gamma=\delta\notin \{\alpha,\beta\}$.
Then, for $(x,y)\in R_\gamma$ and $z\in X\setminus \{x,y\}$ we have
$xz=yz\in \{\alpha,\beta\}$ since $xyz\in \{\alpha\alpha\gamma, \beta\beta\gamma\}$.
But, this is impossible since no element $Y\in \binom{X}{3}$ with $\alpha,\beta\in A_2(Y)$
and $\gamma\gamma\gamma\notin \{\alpha\alpha\gamma, \beta\beta\gamma\}$.

By the last claim, $\gamma,\delta\in \{\alpha,\beta\}$.
This implies $a_2=2$ and $A_2(X)=\{\alpha,\beta\}$. Hence $(X,E_\alpha)$ is the complement of $(X,E_\beta)$.

Suppose each of $(X,E_\alpha)$ and $(X,E_\beta)$ is triangle-free.
Then $|X|\leq 5$ since the Ramsey number $R(3,3)=6$, and $A_3(X)=\{\alpha\alpha\beta,\beta\beta\alpha\}$.
Since $n\geq 5$, it follows that $(X,E_\alpha)$ is isomorphic to the pentagon.

Suppose one of $(X,E_\alpha)$ and $(X,E_\beta)$ contains a triangle.
Without loss of generality we may assume that $\alpha\alpha\alpha\in A_3(X)$.
If $\delta=\beta$, then both of $R_\alpha\cup R_0$ and $R_\beta\cup R_0$ are equivalence relations on $X$,
Since $R_0\cup R_\alpha\cup R_\beta=X\times X$, it does not occur.
Thus, $\delta=\alpha$.
By Lemma~\ref{lem:closed}, $(X,E_\alpha)$ is a disjoint union cliques.
Since $\beta\beta\beta\notin A_3(X)$, it follows from $a_2=$ and $\alpha\alpha\alpha\in A_3(X)$ that
$(X,E_\alpha)$ has exactly two connected components. Therefore, $(X,E_\beta)$ is a complete bipartite graph.

Second, we assume that $|M_2|=1$, i.e., $M_2=\{\alpha\}$.
Let $\beta \in A_2(X)\setminus\{\alpha\}$.
Then, applying Lemma~\ref{lem:closed} for $\Gamma=\{\beta\}$
we obtain that $(X,E_\beta)$ is a matching on $X$.
Since $n\geq 5$, $(X,E_\beta)$ has at least three connected components.

Let $(x,y)\in R_\beta$ and $z\in X\setminus\{x,y\}$.
Then $xz=yz=\alpha$ or $|\{xy,yz,zx\}|=3$ since $\beta\notin M_2=\{\alpha\}$.

We claim that the latter case does not occur.
Suppose $x,y,z\in X$ with $|\{xy,yz,zx\}|=3$ where $xy, yz\in A_2(X)\setminus M_2$.
As mentioned above, each element of $A_2(X)\setminus M_2$ induces a matching on $X$,
and hence, $uz=ux=\alpha$ or $uz=xy$, $ux=yz$.
Similarly, we have $vz=vx=\alpha$ or $vz=xy$, $vx=uz$.
But, the case where $uz=xy$, $ux=yz$ and $vz=xy$, $vx=uz$ since $(X,E_\beta)$ is a matching.
Without loss of generality we may assume that $vz=vx=\alpha$.
Since $vy\notin \{yz,xy\}$, we have $vxy\ne vyz$. It follows from $a_3=2$ that
$vxz\in \{vxy,vvyz\}$, and hence,$vy=\alpha$.
But, this implies $xz\in \{xy,yz\}$, a contradiction.

By the claim, $xz=yz=\alpha$ for each $z\in X\setminus \{x,y\}$.
Since $n\geq 5$ and $(X,E_\beta)$ is a matching on $X$,
$(X,E_\beta)$ has at least three connected components.
Since $((x,y)\in R_\beta$ is arbitrarily taken,
it follows that $A_3(X)=\{\alpha\alpha\alpha,\alpha\alpha\beta\}$ and
especially, $a_2=2$.

Finally, we assume that $M_2=\emptyset$, i.e., $|A_2(Y)|=3$ for each $Y\in \binom{Y}{3}$.
Applying Lemma~\ref{lem:xyz}(i) for $\{x,y,z\}\in \binom{X}{3}$ with $u,v\in X\setminus\{u,v\}$
we have a contradiction that an element in $A_2(\{x,y,z,u,v\})$ is belonged to $M_2$.
\end{proof}
}

\begin{thm}\label{thm:3}
If $a_3\leq 3$ and $5\leq n$, then $a_2\leq a_3$.
\end{thm}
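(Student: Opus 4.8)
The plan is to argue by cases according to the value $a_3\in\{1,2,3\}$. When $a_3=1$ the conclusion is immediate: since $n\geq 5$ we have $2\leq 3\leq n-2$, so Theorem~\ref{thm:a1} applied with $k=3$ gives $a_2=1=a_3$. For $a_3\in\{2,3\}$ I would first record that $a_2\geq 2$ (if $a_2=1$ then $a_i=1$ for all $i$, contradicting $a_3\geq 2$), and then dispose of the case where $X$ contains a scalene triangle $\{x,y,z\}$, i.e.\ $|\{xy,yz,zx\}|=3$: there Lemma~\ref{lem:xyz}(iii),(iv) yield at once $A_2(X)=\{xy,yz,zx\}$ and $a_3=3$, hence $a_2=3=a_3$. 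So the real work is the case in which every element of $A_3(X)$ is isosceles.

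In that case Lemma~\ref{lem:basic2} is available. Since $a_2\geq 2$, not every triangle is equilateral, so I can fix $x,y,z\in X$ with $xy=\alpha\neq\beta=yz=zx$. Applying Lemma~\ref{lem:basic2} to this triple: if for some $u\in X\setminus\{x,y,z\}$ one of $ux,uy,uz$ lies outside $\{\alpha,\beta\}$, the lemma returns $|A_2(X)|=3$ and $a_3=3$, so $a_2=a_3$ and we are done; otherwise every $u\in W:=X\setminus\{x,y,z\}$ satisfies $ux,uy,uz\in\{\alpha,\beta\}$, and I attach to it the \emph{type} $t(u)=(ux,uy,uz)\in\{\alpha,\beta\}^3$. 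The key observation, which I expect to use repeatedly, is that two points of $W$ of distinct type are at distance $\alpha$ or $\beta$: comparing the coordinate $i$ in which the types differ and using that the triangle formed with the corresponding vertex $w_i\in\{x,y,z\}$ is isosceles while $d(p,w_i)\neq d(q,w_i)$ both lie in $\{\alpha,\beta\}$, we are forced to have $d(p,q)\in\{\alpha,\beta\}$. Consequently, if $\gamma\in A_2(X)\setminus\{\alpha,\beta\}$ and $\{p,q\}\in E_\gamma$, then $p,q$ both lie in $W$ (a $\gamma$-edge cannot meet $\{x,y,z\}$) and share a type $t=(t_1,t_2,t_3)$, and the three triangles $\{p,q,x\},\{p,q,y\},\{p,q,z\}$ put $\gamma t_1t_1,\ \gamma t_2t_2,\ \gamma t_3t_3$ into $A_3(X)$, alongside $xyz=\alpha\beta\beta$.

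The conclusion then follows by counting triangle-classes. If $a_3=2$, then $A_3(X)\setminus\{\alpha\beta\beta\}$ is a singleton, which forces $t_1=t_2=t_3=:c$; but then one of $\{p,x,y\},\{p,y,z\}$ is equilateral of type $ccc$, so $\{\alpha\beta\beta,\gamma cc,ccc\}$ are three distinct classes, a contradiction. Hence no extra distance exists and $a_2=2=a_3$. If $a_3=3$, a short split on whether $t$ is constant shows $A_3(X)$ is forced to equal $\{\alpha\beta\beta,\gamma cc,ccc\}$ or $\{\alpha\beta\beta,\gamma\alpha\alpha,\gamma\beta\beta\}$; in either case every class of $A_3(X)$ that involves a value outside $\{\alpha,\beta\}$ involves \emph{this} particular $\gamma$, so a second extra distance $\gamma'\neq\gamma$ would (via $\{p',q',x\}$) produce a class of $A_3(X)$ involving $\gamma'$ yet equal to one involving $\gamma$, which is impossible. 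Therefore $A_2(X)\subseteq\{\alpha,\beta,\gamma\}$ and $a_2\leq 3=a_3$.

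The hard part will be this last step in the $a_3=3$ case: one must extract enough mutually non-isometric triangles from the configuration to pin $A_3(X)$ down exactly, and only then can one rule out more than one ``extra'' distance. The bookkeeping is delicate precisely because the class $xyz=\alpha\beta\beta$ already occupies one of the three available slots of $A_3(X)$, leaving very little room, so the case analysis on the type $t$ has to be carried out with care.
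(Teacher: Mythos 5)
Your argument is correct, and up to the last step it is the same as the paper's: kill $a_3=1$ with Theorem~\ref{thm:a1}, kill scalene triangles with Lemma~\ref{lem:xyz}, fix a non-equilateral isosceles triple $x,y,z$ with $xy=\alpha\ne\beta=yz=zx$, and use Lemma~\ref{lem:basic2} to force $ux,uy,uz\in\{\alpha,\beta\}$ for every external $u$. You diverge only in the endgame. The paper runs the whole proof as one contradiction ($a_2>a_3$), which keeps the contrapositive of Lemma~\ref{lem:basic2} available for a second use: if $u,v\notin\{x,y,z\}$ and $uv\notin\{\alpha,\beta\}$, the absence of scalene triangles forces $ux=vx$, and then $v$ together with two suitable vertices of $\{x,y,z\}$ forms another non-equilateral isosceles triple to which Lemma~\ref{lem:basic2} applies with $u$ as the external point, giving $uv\in\{\alpha,\beta\}$ at once --- a two-line contradiction with no case analysis. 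You instead introduce the type $t(u)=(ux,uy,uz)$, show that a $\gamma$-edge must join points of equal type, and pin down $A_3(X)$ by enumerating types. This does work --- the only surviving types are $(\beta,\beta,\beta)$ and, up to the obvious symmetry, $(\beta,\beta,\alpha)$, yielding exactly your two candidate forms of $A_3(X)$, and the ``second extra distance'' is then excluded as you say --- and it has the side benefit of extracting structural information close to Corollary~\ref{cor:40}. But the step you single out as ``the hard part'' is precisely what a second invocation of Lemma~\ref{lem:basic2} renders unnecessary, so your route is correct but substantially longer than it needs to be.
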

\begin{proof}
Suppose the contrary, i.e., $a_2>a_3$.
By Lemma~\ref{lem:xyz}(iv), $A_3(X)$ consists of only isosceles triangles.
Since $a_2>1$ by Theorem~\ref{thm:a1}, we can take $xyz\in A_3(X)$ with $xy\ne yz=xz$.
Applying the contraposition of Lemma~\ref{lem:basic2} we obtain that the distances from $\{x,y,z\}$ to each point in $X\setminus\{x,y,z\}$
are contained in $\{d(x,y),d(x,z)\}$.

Let $u,v\in X\setminus\{x,y,z\}$ with $d(u,v)\notin \{d(x,y),d(x,z)\}$.
Since $uvx$ is an isosceles triangle, it follows from the result in the last paragraph that
$ux=vx\in \{xy,yz\}$, each of which induces $d(u,v)\in  \{d(x,y),d(x,z)\}$ by Lemma~\ref{lem:basic2}, a contradiction.
\end{proof}

\begin{thm}\label{thm:25}
If $a_3=2$, $n\ge 5$, then for some $\alpha\in A_2(X)$
$(X,E_\alpha)$ is isomorphic to one of the following:
\begin{enumerate}
\item a complete bipartite graph;
\item the complement of a matching on $X$;
\item the pentagon.
\end{enumerate}
\end{thm}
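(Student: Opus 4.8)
\emph{Sketch of proof.} The plan is to pin down $a_2$ first, then to split on $|M_2|$ and, in the main branch, on which two isosceles types occur among $\alpha\alpha\alpha,\alpha\alpha\beta,\alpha\beta\beta,\beta\beta\beta$. Since $a_3=2\le 3$ and $n\ge 5$, Theorem~\ref{thm:3} gives $a_2\le a_3=2$, and $a_2\ge 2$ because $a_2=1$ forces $a_i=1$ for all $i$; so $a_2=2$. Write $A_2(X)=\{\alpha,\beta\}$ with $\alpha\ne\beta$, so that $(X,E_\alpha)$ and $(X,E_\beta)$ are complementary graphs and every $3$-subset of $X$ realizes one of the four types above. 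By Lemma~\ref{lem:major}, $|M_2|\le a_3=2$; and $M_2\ne\emptyset$, for otherwise the remark after Lemma~\ref{lem:mk} would make both $(X,E_\alpha)$ and $(X,E_\beta)$ matchings, so every vertex would have at most two neighbours, which is impossible for $n\ge 5$. Thus $|M_2|\in\{1,2\}$.

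If $|M_2|=1$, say $M_2=\{\alpha\}$, then $\beta\notin M_2$, so $(X,E_\beta)$ is a matching on $X$ by the remark after Lemma~\ref{lem:mk}, and hence $(X,E_\alpha)$ is the complement of a matching --- this is (ii). Suppose now $M_2=\{\alpha,\beta\}$. Since $\alpha\in M_2$, Lemma~\ref{lem:mk} yields a vertex with two $\alpha$-neighbours, hence a $3$-subset of type $\alpha\alpha\alpha$ or $\alpha\alpha\beta$; symmetrically $A_3(X)$ meets $\{\beta\beta\beta,\alpha\beta\beta\}$. As $|A_3(X)|=2$, only a handful of pairs of types can occur, and I would dispose of them in turn. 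If $A_3(X)=\{\alpha\alpha\beta,\alpha\beta\beta\}$ then both colours are triangle-free, so $n\le R(3,3)-1=5$; with $n\ge 5$ this forces $n=5$ and $(X,E_\alpha)\simeq C_5$, the pentagon --- this is (iii). If $A_3(X)=\{\alpha\alpha\alpha,\alpha\beta\beta\}$ (or the mirror pair, swapping $\alpha$ and $\beta$), then $\alpha\alpha\beta\notin A_3(X)$ makes $\{\alpha\}$ closed, so $(X,E_\alpha)$ is a disjoint union of cliques by Lemma~\ref{lem:closed}, while $\beta\beta\beta\notin A_3(X)$ bounds the independence number of $(X,E_\alpha)$ by $2$; thus it has at most two cliques, and exactly two since $a_2=2$ excludes $K_n$, so $(X,E_\beta)$ is complete bipartite --- this is (i). If $A_3(X)=\{\alpha\alpha\alpha,\alpha\alpha\beta\}$ (or the mirror pair), then $\alpha\beta\beta,\beta\beta\beta\notin A_3(X)$ force $(X,E_\beta)$ to be a triangle-free disjoint union of cliques, i.e.\ a matching, contradicting $\beta\in M_2$.

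The one remaining possibility, $A_3(X)=\{\alpha\alpha\alpha,\beta\beta\beta\}$, is where I expect the real work. Now both $\{\alpha\}$ and $\{\beta\}$ are closed, so by Lemma~\ref{lem:closed} both $(X,E_\alpha)$ and $(X,E_\beta)$ are disjoint unions of cliques. Label each vertex $v$ by the pair consisting of its $\alpha$-clique and its $\beta$-clique; this label is injective, and any two distinct vertices agree in exactly one coordinate, being joined by an edge of exactly one colour. The key point is that there cannot be two distinct $\alpha$-cliques and two distinct $\beta$-cliques simultaneously: fixing $v$, a vertex in a different $\beta$-clique must share $v$'s $\alpha$-clique, a vertex in a different $\alpha$-clique must share $v$'s $\beta$-clique, and these two vertices then disagree in both coordinates, a contradiction. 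Hence one of $(X,E_\alpha),(X,E_\beta)$ equals $K_n$, forcing $a_2=1$, which is absurd; so this case is vacuous. Altogether one of (i)--(iii) always holds.
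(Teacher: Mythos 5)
Your proof is correct, but it takes a genuinely different route from the paper's. Both arguments open the same way, invoking Theorem~\ref{thm:3} to force $a_2=2$, but from there the paper splits directly on the component structure of one colour class $(X,E_\beta)$ --- exactly two components with one of size at least three (yielding the complete bipartite case), more than two components (yielding the matching case), or both classes connected (yielding the pentagon via $R(3,3)=6$) --- and in each branch produces three pairwise non-isometric triangles whenever the desired structure fails. You instead split on $|M_2|$ via Lemma~\ref{lem:major} and the remark after Lemma~\ref{lem:mk}, and in the branch $M_2=\{\alpha,\beta\}$ you enumerate all six two-element subsets of $\{\alpha\alpha\alpha,\alpha\alpha\beta,\alpha\beta\beta,\beta\beta\beta\}$ as candidates for $A_3(X)$, killing each with Lemma~\ref{lem:closed}, the independence-number bound, Ramsey, or your coordinate-labelling argument for the all-monochromatic pair $\{\alpha\alpha\alpha,\beta\beta\beta\}$. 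What your organization buys is visible exhaustiveness: the paper's trichotomy on components leaves a couple of degenerate configurations implicit (two components both of size at most two forces $n\le 4$; $(X,E_\beta)$ connected but $(X,E_\alpha)$ not is handled only by the unstated symmetry between $\alpha$ and $\beta$), and it never needs to confront the $\{\alpha\alpha\alpha,\beta\beta\beta\}$ possibility at all, whereas your case list covers everything explicitly and shows that case is vacuous. The paper's version is shorter and stays closer to the graphs; yours leans harder on the closedness machinery of Section~2 and on $M_2$, in the same spirit as the proofs of Lemmas~\ref{lem:tri}--\ref{lem:tri3}. The one step you share with the paper in asserting without proof is that a two-colouring of $K_5$ with both classes triangle-free is the pentagon; that is standard and acceptable here.
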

\begin{proof}
By Theorem~\ref{thm:3}, $a_2=2$, and hence we assume that $A_2(X)=\{\alpha,\beta\}$ where $\alpha\ne \beta$.
Suppose that the graph $(X,E_\beta)$ has exactly two components, one of which has more than two points.
Let $X_1$ and $X_2$ be the distinct connected components of $(X,E_\beta)$ with $|X_1|\geq 3$.
Then $x_1x_2=\alpha$ for all $x_1\in X_1$ and $x_2\in X_2$.
We claim that $xy=\beta$ for all $x,y\in X_i$ and $i=1,2$.
Otherwise, there exist $x,y,z\in X_i$ such that $xy=yz=\alpha$ and $xz=\beta$,
which implies $A_3(X)=\{\alpha\alpha\beta,\alpha\alpha\alpha,\beta\beta\alpha\}$, a contradiction to $a_3=2$.
Therefore, $(X,E_\alpha)$ is complete bipartite.

Suppose that the graph $(X,E_\beta)$ has more than two components.
Notice that one of the components contains more than one points and
hence $A_3(X)=\{\alpha\alpha\alpha,\alpha\alpha\beta\}$. This implies that
each component contains at most two points. Therefore, $(X,E_\beta)$ is a matching. 

Suppose that both $(X,E_\alpha)$ and $(X,E_\beta)$ are connected.
Since the Ramsey number $R(3,3)$ equals $6$ and $A_3(X)=\{\alpha\alpha\beta, \beta\beta\alpha\}$,
we have $|X|\leq 5$, and $(X,E_\alpha)$ is the pentagon by the assumption $|X|\geq 5$.
\end{proof}

If $\Gamma=(V,E)$ is a triangle-free finite graph and $d:V\times V\to \mathbb{R}_{\geq 0}$ is defined
by $d(x,y)=0$ if $x=y$, $d(x,y)=1$ if $\{x,y\}\in E$ and otherwise $d(x,y)=2$,
then $(V,d)$ is a finite metric space with $|A_2(V)|=2$ and $|A_3(V)|\leq 3$.
This implies that such triangle-free graphs except for the graphs or their complement given in Theorem~\ref{thm:25}
induce metric spaces with $a_2=2$ and $a_3=3$.

For the remainder of this section we assume that
$a_2=a_3=3$ and $n\geq 5$, and $A_2(X)=\{\alpha,\beta,\gamma\}$.

\begin{lem}\label{lem:trifree}
If all of $(X,E_\alpha)$, $(X,E_\beta)$ and $(X,E_\gamma)$ are triangle-free, then
$A_3(X)=\{\alpha\alpha\beta,\alpha\alpha\gamma,\beta\beta\gamma\}$ for a suitable ordering of $\alpha$, $\beta$ and $\gamma$.
\end{lem}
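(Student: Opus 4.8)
My plan is to pin down $A_3(X)$ by a short chain of reductions, using that triangle-freeness of $E_\alpha,E_\beta,E_\gamma$ forbids the monochromatic types $\alpha\alpha\alpha,\beta\beta\beta,\gamma\gamma\gamma$. Consequently each of the three members of $A_3(X)$ is either \emph{isosceles}, of shape $\mu\mu\nu$ with $\mu\ne\nu$ in $\{\alpha,\beta,\gamma\}$, or the \emph{rainbow} type $\alpha\beta\gamma$. The goal is then (a) every two-element subset of $\{\alpha,\beta,\gamma\}$ is the colour pair of some member of $A_3(X)$; (b) $\alpha\beta\gamma\notin A_3(X)$; (c) the three isosceles types that remain are, up to relabelling, $\alpha\alpha\beta,\alpha\alpha\gamma,\beta\beta\gamma$ and not a ``$3$-cycle''.

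For (a) I would argue by contradiction: if, say, no member of $A_3(X)$ contains both $\beta$ and $\gamma$, then no triangle of $X$ has both a $\beta$-edge and a $\gamma$-edge, so no vertex has both a $\beta$-neighbour and a $\gamma$-neighbour. This splits $X$ into the vertices with a $\beta$-neighbour, those with a $\gamma$-neighbour, and those with neither; all $\beta$-edges then lie inside the first block, all $\gamma$-edges inside the second, and every remaining edge is an $\alpha$-edge. Using $E_\beta,E_\gamma\ne\emptyset$ together with triangle-freeness of all three colour graphs, one forces the ``neither'' block to be empty and then the other two blocks to be a $\beta$-clique and a $\gamma$-clique, each of size at most $2$; hence $n\le 4$, contradicting $n\ge 5$. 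So every colour pair is covered.

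For (b) I would fix, assuming $\alpha\beta\gamma\in A_3(X)$, a rainbow triangle $xyz$ with $d(x,y)=\alpha$, $d(y,z)=\beta$, $d(z,x)=\gamma$. By Lemma~\ref{lem:xyz}(i) every $u\in X\setminus\{x,y,z\}$ forms a rainbow triangle with two of $x,y,z$, and by Lemma~\ref{lem:xyz}(ii) its distances to $x,y,z$ lie in $\{\alpha,\beta,\gamma\}$; moreover, since $A_3(X)$ then has only two isosceles members, at least one colour is the doubled colour of no member of $A_3(X)$, so one of $E_\alpha,E_\beta,E_\gamma$ is a matching. Taking two such vertices $u,v$ (available since $n\ge 5$), I would run a case analysis on how each attaches to $\{x,y,z\}$; in every case, using repeatedly that $E_\alpha,E_\beta,E_\gamma$ are triangle-free and that $|A_3(X)|=3$, one is driven either into a monochromatic triangle or into a fourth isometry type of triangle, a contradiction. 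Hence $\alpha\beta\gamma\notin A_3(X)$, and by (a) the three isosceles members of $A_3(X)$ realise the three colour pairs one apiece; so after relabelling $A_3(X)$ is $\{\alpha\alpha\beta,\beta\beta\gamma,\gamma\gamma\alpha\}$ (the cyclic orientation) or $\{\alpha\alpha\beta,\alpha\alpha\gamma,\beta\beta\gamma\}$.

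For (c), assume the cyclic case. Then $\alpha\alpha\beta$ is the only available type of shape $\alpha\alpha\,\cdot$, so the $\alpha$-neighbours of any vertex are pairwise $\beta$-adjacent and hence form a $\beta$-clique of size $\le 2$; the same bound on the $\beta$- and $\gamma$-degrees gives $n\le 7$. Fixing an $\alpha$-edge $xy$ (which exists because $a_2=3$) and partitioning $X\setminus\{x,y\}$ according to $(d(x,\cdot),d(y,\cdot))$, which can only be $(\alpha,\beta)$, $(\beta,\alpha)$ or $(\gamma,\gamma)$, the first two parts have size $\le 1$ and the third is an $\alpha$-clique of size $\le 2$; the cyclic pattern then forces every remaining distance, and one checks that the only $\beta$-edges in all of $X$ are the (at most two) edges from $x$ and from $y$ into the first two parts. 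Thus no triangle has two $\beta$-edges, so $\beta\beta\gamma\notin A_3(X)$, contradicting $|A_3(X)|=3$. Therefore $A_3(X)=\{\alpha\alpha\beta,\alpha\alpha\gamma,\beta\beta\gamma\}$ for a suitable ordering, as claimed. The hard part will be step (b): steps (a) and (c) collapse to clean extremal/structural arguments, whereas excluding the rainbow type genuinely seems to require a finite but somewhat involved case analysis around a rainbow triangle.
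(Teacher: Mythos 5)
Your overall architecture (forbid monochromatic types, show every colour pair is realized, exclude the rainbow type $\alpha\beta\gamma$, then exclude the cyclic pattern) is a legitimate route, and steps (a) and (c) are essentially sound: in (a) the block decomposition does force $N=\emptyset$ and $B$, $C$ to be a $\beta$-clique and a $\gamma$-clique of size at most $2$, giving $n\le 4$; in (c) the degree bounds and the partition of $X\setminus\{x,y\}$ by $(d(x,\cdot),d(y,\cdot))$ do collapse the cyclic case to $n\le 4$ (indeed, a vertex of the $(\gamma,\gamma)$-part together with a vertex of the $(\alpha,\beta)$-part already produces a $\gamma\gamma\beta$ triangle, which is not in the cyclic list). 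The genuine gap is step (b). You assert that assuming $\alpha\beta\gamma\in A_3(X)$ and taking two further points $u,v$, ``in every case one is driven either into a monochromatic triangle or into a fourth isometry type,'' but no case analysis is actually carried out, and you yourself flag this as the hard part. It is not a routine verification: one must first split on whether the two isosceles members of $A_3(X)$ double the same colour (e.g.\ $\{\alpha\beta\gamma,\alpha\alpha\beta,\alpha\alpha\gamma\}$, where the matching conditions on $E_\beta$ and $E_\gamma$ eventually force a vertex with two $\beta$-neighbours) or two different colours (e.g.\ $\{\alpha\beta\gamma,\alpha\alpha\nu_1,\beta\beta\nu_2\}$ with four choices of $(\nu_1,\nu_2)$), and each subcase needs its own argument. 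As written, the crux of the lemma is a declaration of intent rather than a proof.

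For comparison, the paper avoids the rainbow analysis entirely by splitting on maximum degree. If some colour class, say $E_\alpha$, has a vertex $x$ with three $\alpha$-neighbours $u,v,w$, then all edges of $uvw$ lie in $E_\beta\cup E_\gamma$; triangle-freeness of $E_\beta$ and $E_\gamma$ forces $uvw$ to use both colours, so $\alpha\alpha\beta$ and $\alpha\alpha\gamma$ both occur among $xuv,xvw,xwu$, and $uvw$ itself is $\beta\beta\gamma$ or $\gamma\gamma\beta$; with $a_3=3$ this is already the whole of $A_3(X)$ up to swapping $\beta$ and $\gamma$, and the rainbow type is excluded for free. If every colour class has maximum degree at most $2$, then $\binom{n}{2}=|E_\alpha|+|E_\beta|+|E_\gamma|\le 3n$ gives $n\le 7$, and the finitely many remaining configurations are ruled out by hand. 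If you want to salvage your proof, you must either write out the full case analysis for step (b) or replace steps (a)--(b) by a degree argument of this kind.
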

\begin{proof}
First, we assume that each of the three graphs has no vertex of degree at least three.
Then each of the three graphs is a disjoint union of paths and cycles with at most $n$ edges.
Therefore,
\[\binom{n}{2}=|\binom{X}{2}|=|E_\alpha|+|E_\beta|+|E_\gamma|\leq 3n,\]
and we obtain $n\leq 7$.
Since they are triangle-free, the equality holds if and only if
each of the three graphs is isomorphic to the cycle of length seven,
which implies $A_2(X)=M_2$, namely, every triangle is an isosceles.
But, such an edge-coloring of $K_n$ without any tricolored triangle does not exists
Thus, $n\leq 6$.
If $n=6$ and one of the three graphs is a cycle of length six,
then the three diagonal lines from a fixed vertex should be colored into the antipodal line and the others
because of $a_3=3$, but applying this argument for other vertices we have a mono-colored triangle, a contradiction.
Therefore, if $n=6$, then $|E_\alpha|=|E_\beta|=|E_\gamma|=5$ since any triangle-free graph with six edges and six vertices
is a cycle of length $6$. In this case we have $A_2(X)=M_2$, so that any tricolored triangle is forbidden.
This condition helps us to make a contradiction in constructing an edge-coloring.
Thus, $n\leq 5$, namely, $n=5$ by the assumption on $n$.
If $(X,E_\alpha)$ forms a pentagon, then it is impossible to decompose the complement of $E_\gamma$ into two parts
satisfying $a_3=3$. We have the similar contradiction for $(X,E_\beta)$ and $(X,E_\gamma)$.
Therefore, at least one of the three graphs has exactly four edges, and hence it is a path of length four.
But, it can be easily verified that such decompositions with $a_3=3$ do not occur.

Second, we assume that at least one of the three graphs has a vertex of degree at least three.
Without loss of generality we may assume that $R_\alpha(x)$ has distinct elements $u,v,w$.
Since $(X,E_\alpha)$ is triangle-free, $(u,v),(v,w),(w,u)\in R_\beta\cup R_\gamma$.
Since both of $(X,E_\alpha)$ and $(X,E_\beta)$ are triangle-free, it follows that
$\alpha\alpha\beta,\alpha\alpha\gamma\in A_3(X)$, and either $\beta\beta\gamma\in A_3(X)$ or $\gamma\gamma\beta\in A_3(X)$.
By symmetry between $\beta$ and $\gamma$ we obtain the formula given in this lemma.
\end{proof}

\begin{lem}\label{lem:tri}
If $\alpha\alpha\alpha \in A_3(X)$,
then $(X,E_\beta)$ or $(X,E_\gamma)$ is a matching on $X$.
\end{lem}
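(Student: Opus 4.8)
The plan is to translate the conclusion into the language of $M_2$. By the remark following Lemma~\ref{lem:mk}, a colour class $(X,E_\delta)$ is a matching exactly when $\delta\notin M_2$, i.e. when $\delta\delta\epsilon\notin A_3(X)$ for every $\epsilon\in A_2(X)$. So I would suppose, for contradiction, that neither $(X,E_\beta)$ nor $(X,E_\gamma)$ is a matching; then there are $\epsilon_1,\epsilon_2\in A_2(X)$ with $\beta\beta\epsilon_1\in A_3(X)$ and $\gamma\gamma\epsilon_2\in A_3(X)$. The three types $\alpha\alpha\alpha$, $\beta\beta\epsilon_1$, $\gamma\gamma\epsilon_2$ are pairwise distinct (each of the last two has a distance occurring with multiplicity at least two that the others lack), so $a_3=3$ forces $A_3(X)=\{\alpha\alpha\alpha,\ \beta\beta\epsilon_1,\ \gamma\gamma\epsilon_2\}$.

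Inspecting this list, none of its members is $\alpha\alpha\beta$, $\alpha\alpha\gamma$ or $\alpha\beta\gamma$, so these triangles do not occur. From $\alpha\alpha\beta,\alpha\alpha\gamma\notin A_3(X)$ (together with $\alpha\alpha\alpha\in A_3(X)$) one gets that $\{\alpha\}$ is closed, so Lemma~\ref{lem:closed} shows $(X,E_\alpha)$ is a disjoint union of cliques; let $C$ be the component containing an $\alpha$-triangle, a clique with $|C|\ge 3$, and note $C\ne X$ since otherwise $a_2=1$. Next I would show that every $w\in X\setminus C$ joins all of $C$ by one and the same colour: no $\alpha$-edge links $w$ to $C$ (distinct components of $(X,E_\alpha)$), and if $wa=\beta$ and $wb=\gamma$ with $a,b\in C$ then $\{w,a,b\}$ has the excluded type $\alpha\beta\gamma$. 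Writing $\delta(w)\in\{\beta,\gamma\}$ for that colour, this yields a partition $X\setminus C=W_\beta\sqcup W_\gamma$ of the nonempty set $X\setminus C$.

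The contradiction then appears in two short cases. Suppose first $W_\beta\ne\emptyset$ and $W_\gamma\ne\emptyset$, and take $p\in W_\beta$, $q\in W_\gamma$, $c_1,c_2\in C$. The triangles $\{p,c_1,c_2\}$ and $\{q,c_1,c_2\}$ have types $\beta\beta\alpha$ and $\gamma\gamma\alpha$, which belong to $A_3(X)$ only if $\epsilon_1=\epsilon_2=\alpha$; but then $A_3(X)=\{\alpha\alpha\alpha,\beta\beta\alpha,\gamma\gamma\alpha\}$, while $\{p,q,c_1\}$ has a type containing both $\beta$ and $\gamma$, and no member of $A_3(X)$ does. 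Suppose instead $W_\gamma=\emptyset$ (the case $W_\beta=\emptyset$ is symmetric). Then no $\gamma$-edge meets $C$, so some $\gamma$-edge $\{p,q\}$ lies inside $X\setminus C=W_\beta$; choosing $c_1,c_2\in C$, the triangle $\{p,c_1,c_2\}$ of type $\beta\beta\alpha$ forces $\epsilon_1=\alpha$, yet $\{p,q,c_1\}$ has type $\beta\beta\gamma$, which is not in $\{\alpha\alpha\alpha,\beta\beta\alpha,\gamma\gamma\epsilon_2\}$ — contradiction.

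The step I expect to require the most care is the structural reduction in the second paragraph: one must verify that ``$\{\alpha\}$ closed'' really is forced by $\alpha\alpha\beta,\alpha\alpha\gamma\notin A_3(X)$ together with $\alpha\alpha\alpha\in A_3(X)$, and that $C$ is a full connected component of $(X,E_\alpha)$ so that truly no $\alpha$-edge leaves it — this is exactly what powers the single-colour observation and the clean split into $W_\beta$ and $W_\gamma$. Once those are secured the case analysis is immediate; I note that the argument in fact only uses $a_2=a_3=3$ and $\alpha\alpha\alpha\in A_3(X)$, not the standing hypothesis $n\ge 5$.
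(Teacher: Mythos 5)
Your proof is correct and follows essentially the same route as the paper's: both reduce the hypothesis to $\beta,\gamma\in M_2$, deduce $A_3(X)=\{\alpha\alpha\alpha,\beta\beta\epsilon_1,\gamma\gamma\epsilon_2\}$ so that $\{\alpha\}$ is closed and $(X,E_\alpha)$ is a disjoint union of cliques by Lemma~\ref{lem:closed}, and then derive a contradiction from the edges leaving an $\alpha$-clique of size at least three. The only cosmetic difference is in the endgame: the paper applies Lemma~\ref{lem:closed} a second time to $\{\alpha,\beta\}$ to produce four distinct triangle types, whereas you split $X\setminus C$ into $W_\beta\sqcup W_\gamma$ and check the two cases by hand; both are valid.
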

\begin{proof}
Suppose the contrary, i.e., $\beta,\gamma\in M_2$, so that $A_2(X)=M_2$ by $a_3=3$ and Lemma~\ref{lem:major}.
Then, by Lemma~\ref{lem:closed}, $(X,E_\alpha)$ is a disjoint union of cliques.
Since $\alpha\alpha\alpha\in A_3(X)$, there exists a clique in $(X,E_\alpha)$ with size at leats three.
Since $(X,E_\beta\cup E_\gamma)$ is complete multipartite,
it follows that $\beta\beta\alpha\in A_3(X)$ or $\gamma\gamma\alpha\in A_3(X)$.
Without loss of generality we may assume that $\beta\beta\alpha\in A_3(X)$.
Since $\gamma\in M_2$, it follows from Lemma~\ref{lem:closed} that
$(X,E_\alpha\cup E_\beta)$ is a disjoint union of cliques, which implies that
$\gamma\gamma\beta, \gamma\gamma\alpha\in A_3(X)$, which contradicts $a_3=3$.
\end{proof}

\begin{lem}\label{lem:tri2}
If $\alpha\alpha\alpha \in A_3(X)$ and $\beta,\gamma\notin M_2$,
then $A_3(X)=\{\alpha\alpha\alpha,\alpha\alpha\beta,\alpha\alpha\gamma\}$ for a suitable ordering of $\alpha$, $\beta$ and $\gamma$.
\end{lem}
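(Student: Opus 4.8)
The plan is to determine the admissible colour patterns of the triangles of $X$ and then to eliminate the single unwanted pattern by a short counting argument. First, by the Remark following Lemma~\ref{lem:mk}, the hypothesis $\beta,\gamma\notin M_2$ says precisely that both $(X,E_\beta)$ and $(X,E_\gamma)$ are matchings on $X$; in particular no vertex of $X$ has two $\beta$-neighbours and none has two $\gamma$-neighbours.

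Next I would enumerate the possible elements of $A_3(X)$, recording a triangle by the unordered triple of colours of its three edges. Since any two edges of a triangle meet in a vertex, a triangle carrying two edges of colour $\beta$ (that is, of type $\alpha\beta\beta$, $\beta\beta\gamma$ or $\beta\beta\beta$) would create a vertex of $\beta$-degree at least $2$, which is excluded; symmetrically no triangle carrying two edges of colour $\gamma$ can occur. Hence the only possible triangle types are $\alpha\alpha\alpha$, $\alpha\alpha\beta$, $\alpha\alpha\gamma$ and $\alpha\beta\gamma$. As $\alpha\alpha\alpha\in A_3(X)$ by hypothesis and $a_3=3$, it follows that $A_3(X)$ consists of $\alpha\alpha\alpha$ together with exactly two of the three types $\alpha\alpha\beta$, $\alpha\alpha\gamma$, $\alpha\beta\gamma$.

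It then remains to rule out $\alpha\beta\gamma\in A_3(X)$. Assume it holds; then one of $\alpha\alpha\beta$, $\alpha\alpha\gamma$ is missing from $A_3(X)$, and by the symmetry between $\beta$ and $\gamma$ we may suppose $\alpha\alpha\gamma\notin A_3(X)$. Choose $(x,y)\in R_\gamma$, which exists because $a_2=3$ forces $R_\gamma\ne\emptyset$. For every $z\in X\setminus\{x,y\}$ the triangle $xyz$ contains the $\gamma$-edge $xy$, so by the list of admissible types it is $\alpha\alpha\gamma$ or $\alpha\beta\gamma$; since $\alpha\alpha\gamma\notin A_3(X)$ it must be $\alpha\beta\gamma$, and hence exactly one of $d(x,z)$, $d(y,z)$ equals $\beta$. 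Thus every one of the $n-2$ points of $X\setminus\{x,y\}$ is a $\beta$-neighbour of $x$ or of $y$; but $(X,E_\beta)$ is a matching, so $x$ and $y$ have at most one $\beta$-neighbour apiece, giving $n-2\le 2$ and contradicting $n\ge 5$. Therefore $\alpha\beta\gamma\notin A_3(X)$ and $A_3(X)=\{\alpha\alpha\alpha,\alpha\alpha\beta,\alpha\alpha\gamma\}$. The only step needing real care is the exhaustive check that no further triangle type is admissible; once that is in place the counting step is immediate, so I do not expect a genuine obstacle here.
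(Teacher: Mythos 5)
Your proof is correct. The engine is the same as the paper's: both arguments rest on the fact that, since $(X,E_\beta)$ and $(X,E_\gamma)$ are matchings, for any fixed edge $\{x,y\}$ at most two of the $n-2\ge 3$ remaining points can fail to be $\alpha$-adjacent to both $x$ and $y$. The paper uses this positively and more economically: it picks $(x,y)\in R_\beta$, produces a $z$ with $xz=yz=\alpha$, concludes $\alpha\alpha\beta\in A_3(X)$, gets $\alpha\alpha\gamma\in A_3(X)$ by symmetry, and is done since $a_3=3$. You instead first enumerate the admissible triangle types ($\alpha\alpha\alpha$, $\alpha\alpha\beta$, $\alpha\alpha\gamma$, $\alpha\beta\gamma$) and then eliminate $\alpha\beta\gamma$ by the contrapositive of the same count applied to a $\gamma$-edge. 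Your route is slightly longer but makes explicit why $\alpha\beta\gamma$ cannot occur, which the paper's terse proof leaves implicit; the paper's route avoids the case analysis entirely. Either way the conclusion follows, and I see no gap in your argument.
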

\begin{proof}
Let $(x,y)\in R_\beta$. 
Since $n\geq 5$, it follows that there exists $z\in X$ such that $(x,z),(y,z)\in R_\alpha$.
This implies that $\alpha\alpha\beta\in A_3(X)$. 
Similarly, we have $\alpha\alpha\beta\in A_3(X)$ by symmetry between $\beta$ and $\gamma$. 
\end{proof}

\begin{lem}\label{lem:tri3}
If $\alpha\alpha\alpha \in A_3(X)$ and $\beta\in M_2$,
then $A_3(X)=\{\alpha\alpha\alpha,\beta\beta\alpha,\alpha\beta\gamma\}$ or
$A_3(X)=\{\alpha\alpha\alpha,\beta\beta\alpha,\beta\beta\gamma\}$
for a suitable ordering of $\alpha$, $\beta$ and $\gamma$.
\end{lem}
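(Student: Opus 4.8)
The plan is to determine the two unknown members of $A_3(X)$ beyond $\alpha\alpha\alpha$ by a short case analysis, using repeatedly that $(X,E_\gamma)$ is forced to be a matching. First I would record the ambient structure. Since $\alpha\alpha\alpha\in A_3(X)$ and $\beta\in M_2$, Lemma~\ref{lem:tri} together with the characterisations of $M_2$ and of triangle-free colour classes in the remark after Lemma~\ref{lem:mk} force $(X,E_\gamma)$ to be a matching; hence $\gamma\notin M_2$, the set $\{\gamma\}$ is closed, and by Lemma~\ref{lem:closed} and that same remark no element of $A_3(X)$ can have two sides equal to $\gamma$. I would also isolate one reusable obstruction: if some $\mu\in A_2(X)$ satisfies $\mu\mu\mu\in A_3(X)$ and $\mu\mu\mu$ is the only element of $A_3(X)$ with a side $\mu$, then $\{\mu\}$ is closed, so $(X,E_\mu)$ is a disjoint union of cliques one of which has size at least $3$ and (as $a_2>1$) is not all of $X$; choosing a vertex outside that clique together with two vertices inside it yields a triangle with exactly one side $\mu$, a contradiction.

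Next I would prove $\beta\beta\beta\notin A_3(X)$. Because $\beta\in M_2$ there are $x,y,z$ with $xy=xz=\beta$; if $yz=\beta$ then $\beta\beta\beta\in A_3(X)$, so $A_3(X)=\{\alpha\alpha\alpha,\beta\beta\beta,T\}$, where $T$ must involve $\gamma$ (every distance occurs in some triangle, as $n\ge 3$) and, having at most one $\gamma$-side, lies in $\{\alpha\alpha\gamma,\alpha\beta\gamma,\beta\beta\gamma\}$. For $T=\alpha\alpha\gamma$ the reusable obstruction applies with $\mu=\beta$, and for $T=\beta\beta\gamma$ it applies with $\mu=\alpha$. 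For $T=\alpha\beta\gamma$ no triangle has exactly two sides $\alpha$, so $\{\alpha\}$ is closed and $(X,E_\alpha)$ contains a clique $C$ with $|C|\ge 3$ that is not all of $X$; a vertex $v\notin C$ has at most one $\gamma$-neighbour, hence two $\beta$-neighbours $c_1,c_2\in C$, and then the triangle on $\{v,c_1,c_2\}$ is $\beta\beta\alpha\notin A_3(X)$, a contradiction. Thus $yz\in\{\alpha,\gamma\}$.

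Finally I would split on the value of $yz$. If $yz=\alpha$ then $\beta\beta\alpha\in A_3(X)$ and the third triangle $T$ involves $\gamma$ with a single $\gamma$-side, so $T\in\{\alpha\alpha\gamma,\alpha\beta\gamma,\beta\beta\gamma\}$; to exclude $T=\alpha\alpha\gamma$, fix $(u,v)\in R_\gamma$ and note every other vertex then lies at distance $\alpha$ from both $u$ and $v$, which forces all remaining pairwise distances into $\{\alpha,\gamma\}$ and makes $\beta$ disappear, impossible. This leaves exactly $A_3(X)=\{\alpha\alpha\alpha,\beta\beta\alpha,\alpha\beta\gamma\}$ or $\{\alpha\alpha\alpha,\beta\beta\alpha,\beta\beta\gamma\}$. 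If $yz=\gamma$ then $\beta\beta\gamma\in A_3(X)$ and the third triangle $T$ lies in $\{\alpha\alpha\beta,\beta\beta\alpha,\alpha\alpha\gamma,\alpha\beta\gamma\}$ after discarding $\beta\beta\beta$ by the previous paragraph; the values $\alpha\alpha\beta$ and $\alpha\alpha\gamma$ are excluded by looking at an edge $(u,v)\in R_\gamma$, which in each case yields either a triangle of $\gamma$-edges (impossible in a matching) or the disappearance of $\beta$, and $\alpha\beta\gamma$ is excluded by the closed-$\{\alpha\}$/clique argument of the previous paragraph; hence $T=\beta\beta\alpha$ and $A_3(X)=\{\alpha\alpha\alpha,\beta\beta\alpha,\beta\beta\gamma\}$. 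In every case $A_3(X)$ has one of the two stated forms. The step I expect to be the main obstacle is the elimination of $T=\alpha\beta\gamma$ (which recurs twice): there the contradiction is not a matter of counting sides but requires extracting a $3$-clique inside $(X,E_\alpha)$ and exploiting that $(X,E_\gamma)$ is a matching; keeping track that every admissible value of $T$ has been treated is the other delicate point.
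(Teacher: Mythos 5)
Your proof is correct and rests on exactly the same machinery as the paper's: Lemma~\ref{lem:tri} forcing $(X,E_\gamma)$ to be a matching, Lemma~\ref{lem:closed} turning ``no triangle with exactly two $\mu$-sides'' into a disjoint union of cliques, and the observation that a vertex outside a clique of size at least $3$ sends at least two $\beta$-edges into it. The only difference is organizational --- you pivot on the $\beta\beta$-triangle guaranteed by $\beta\in M_2$ after first excluding $\beta\beta\beta$, whereas the paper pivots on which $\gamma$-containing triangle occurs --- and every sub-case you sketch does close as claimed.
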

\begin{proof}
Since $\alpha,\beta\in M_2$, it follows from Lemma~\ref{lem:tri} that $\gamma \notin M_2$.
Thus, any element $[Y]\in A_3(X)$ with $\gamma\in A_2(Y)$ is one of the following:
\[ \alpha\beta\gamma,\alpha\alpha\gamma, \beta\beta\gamma.\]
If $\alpha\beta\gamma\in A_3(X)$, then $(X,E_\alpha)$ is a disjoint union of cliques by Lemma~\ref{lem:closed} and $\beta\in M_2$.
Since $\gamma\notin M_2$, it follows that
$\beta\beta\alpha\in A_3(X)$ as desired.

If $\alpha\alpha\gamma\in A_3(X)$,
then $(X,E_\alpha\cup E_\gamma)$ is a disjoint union of cliques by Lemma~\ref{lem:closed} and $\beta\in M_2$.
This implies that $\beta\beta\alpha, \beta\beta\gamma\in A_3(X)$, a contradiction to $a_3=3$.

Finally, we assume that
\[\mbox{$\beta\beta\gamma\in A_3(X)$ and $\alpha\beta\gamma,\alpha\alpha\gamma\notin A_3(X)$.}\]
Then the remaining element in $A_3(X)$ is one of the following:
\[\alpha\alpha\gamma,\beta\beta\beta, \beta\beta\alpha, \alpha\alpha\beta.\]
Applying Lemma~\ref{lem:closed} for the first three cases
we obtain that $(X,E_\alpha\cup E_\gamma)$ is a disjoint union of cliques for the first case,
which implies $\beta\beta\alpha\in A_3(X)$ as desired.
Also, for the last case we obtain from Lemma~\ref{lem:closed} that
$(X,E_\beta\cup E_\gamma)$ is a disjoint union of cliques, which implies
$\alpha\alpha\gamma\in A_3(X)$, a contradiction to $a_2=2$.
\end{proof}

\begin{cor}\label{cor:40}
The set $A_3(X)$ forms
one of the following for a suitable ordering of $\alpha,\beta$ and $\gamma\in A_2(X)$:
\begin{enumerate}
\item $\{\alpha\alpha\beta,\alpha\alpha\gamma, \beta\beta\gamma\}$;
\item  $\{\alpha\alpha\alpha, \alpha\beta\gamma, \beta\beta\alpha\}$;
\item $\{\alpha\alpha\alpha, \alpha\alpha\beta, \alpha\alpha\gamma\}$;
\item $\{\alpha\alpha\alpha, \beta\beta\alpha, \beta\beta\gamma\}$.
\end{enumerate}
\end{cor}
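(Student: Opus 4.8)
The plan is a short case analysis feeding directly into the four lemmas just established, organized by which (if any) of the three graphs $(X,E_\alpha)$, $(X,E_\beta)$, $(X,E_\gamma)$ carries a monochromatic triangle.

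First I would dispose of the triangle-free case: if none of $\alpha\alpha\alpha$, $\beta\beta\beta$, $\gamma\gamma\gamma$ lies in $A_3(X)$, then all three graphs are triangle-free (by the Remark following Lemma~\ref{lem:mk}), and Lemma~\ref{lem:trifree} gives $A_3(X)=\{\alpha\alpha\beta,\alpha\alpha\gamma,\beta\beta\gamma\}$ after renaming, which is exactly shape~(i).

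Otherwise some graph contains a triangle; since the labels $\alpha,\beta,\gamma$ are interchangeable I would relabel so that $\alpha\alpha\alpha\in A_3(X)$. Choosing a vertex of such a triangle shows $\alpha\in M_2$ via Lemma~\ref{lem:mk}, and Lemma~\ref{lem:tri} then forces $\beta\notin M_2$ or $\gamma\notin M_2$, so $|M_2\cap\{\beta,\gamma\}|\le 1$. If $M_2\cap\{\beta,\gamma\}=\emptyset$, Lemma~\ref{lem:tri2} yields shape~(iii). If exactly one of $\beta,\gamma$ lies in $M_2$, I would relabel so that it is $\beta$; then Lemma~\ref{lem:tri3} yields shape~(ii) or shape~(iv). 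This exhausts all cases.

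The only point needing care — and it is bookkeeping rather than mathematics — is to verify that the relabelings used in the separate branches are each compatible with the phrase ``for a suitable ordering of $\alpha,\beta$ and $\gamma$'' appearing in Lemmas~\ref{lem:trifree}, \ref{lem:tri2}, \ref{lem:tri3} and in the corollary itself, so that the final list of four shapes is stated consistently. No new combinatorial input is required; the corollary is simply the disjunction of the conclusions of the four preceding lemmas, and the main (mild) obstacle is keeping the case division exhaustive and the renamings coherent.
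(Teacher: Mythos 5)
Your proposal is correct and follows essentially the same route as the paper: dispatch the triangle-free case via Lemma~\ref{lem:trifree}, then relabel so that $\alpha\alpha\alpha\in A_3(X)$ and invoke Lemmas~\ref{lem:tri}, \ref{lem:tri2} and \ref{lem:tri3}. Your version merely makes explicit the subdivision by $|M_2\cap\{\beta,\gamma\}|$ that the paper leaves implicit.
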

\begin{proof}
By Lemma~\ref{lem:trifree}, if each of $(X,E_\alpha)$, $(X,E_\beta)$ and $(X,E_\gamma)$ is triangle-free,
then (i) holds.

Suppose one of the three graphs contain a triangle.
We may assume that $(X,E_\alpha)$ contains a triangle, equivalently, $\alpha\alpha\alpha\in A_3(X)$.
By Lemma~\ref{lem:tri}, \ref{lem:tri2} and \ref{lem:tri3},
we have only (ii), (iii) and (iv).
\end{proof}

\begin{ex}\label{ex:1}
Let $\{Y,Z\}$ be a bipartition of $X$ with $|Y|=|Z|=4$, 
$E_\gamma$ a perfect matching on $X$ contained in $\binom{Y}{2}\cup\binom{Z}{2}$,
$E_\beta=(\binom{Y}{2}\cup \binom{Z}{2})\setminus E_\gamma$ and
$E_\alpha$ the complement of $E_\beta\cup E_\gamma$.
For each subset $W$ of $X$, if $|W|\geq 5$, then
$A_3(W)=\{\alpha\alpha\beta,\alpha\alpha\gamma,\beta\beta\gamma\}$.
\end{ex}

\begin{ex}\label{ex:2}
Let $\{Y,Z\}$ be a bipartition of $X$.
We define $E_\alpha=\binom{Y}{2}\cup \binom{Z}{2}$, $E_\gamma$ to be a matching between $Y$ and $Z$, and $E_\beta$ to be the complement
of $E_\alpha\cup E_\gamma$. 
Then $A_3(X)=\{\alpha\alpha\alpha,\alpha\beta\gamma,\beta\beta\alpha\}$.  
\end{ex}

\begin{ex}\label{ex:3}
Let $(X,E_\beta)$ and $(X,E_\gamma)$ be non-empty matchings on $X$ such that
$(X,E_\beta\cup E_\gamma)$ is also a matching on $X$, and $E_\alpha$
the complement of $E_\beta\cup E_\gamma$. Then $A_3(X)=\{\alpha\alpha\alpha,\alpha\alpha\beta,\alpha\alpha\gamma\}$.
\end{ex}

\begin{ex}\label{ex:4}
Let $\{Y, Z\}$ be a bipartition of $X$ with $|Z|=2$.
We define $E_\alpha=\binom{Y}{2}$, $E_\gamma=\binom{Z}{2}$ and $E_\beta$ to be the complement
of $E_\alpha\cup E_\gamma$. Then $A_3(X)=\{\alpha\alpha\alpha,\beta\beta\alpha,\beta\beta\gamma\}$.
\end{ex}
\begin{thm}\label{thm:a4}
If $a_2=a_3=3$ and $5\leq n$, then $(X,d)$ is isomorphic to
a metric space given in Example~\em{\ref{ex:1}-\ref{ex:4}}.
\end{thm}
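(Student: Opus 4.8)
The plan is to leverage Corollary~\ref{cor:40}, which already splits $A_3(X)$ into exactly four combinatorial types, and show that in each case the colored graph $(X;E_\alpha,E_\beta,E_\gamma)$ is forced (up to relabeling the colors and permuting $X$) to be the one described in the corresponding Example~\ref{ex:1}--\ref{ex:4}. So the theorem reduces to four independent structural lemmas, one per case; I would present them in the order (iii), (iv), (ii), (i), roughly in increasing difficulty.

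\emph{Case (iii): $A_3(X)=\{\alpha\alpha\alpha,\alpha\alpha\beta,\alpha\alpha\gamma\}$.} Since $\beta\beta\beta,\gamma\gamma\gamma\notin A_3(X)$, Lemma~\ref{lem:closed} (or the Remark) shows $(X,E_\beta)$ and $(X,E_\gamma)$ are triangle-free; since also $\beta\beta\gamma,\gamma\gamma\beta\notin A_3(X)$ and $\alpha\beta\gamma\notin A_3(X)$, no vertex can be incident to two edges of color in $\{\beta,\gamma\}$ of differing or equal color in a way that creates a forbidden triangle. Concretely, I would show each vertex has $\beta$-degree $\le 1$, $\gamma$-degree $\le 1$, and if it has both then the two non-$\alpha$ neighbours are $\alpha$-joined—but $\alpha\beta\gamma\notin A_3(X)$ kills that, so $(X,E_\beta\cup E_\gamma)$ is a matching and $E_\alpha$ is its complement; this is exactly Example~\ref{ex:3}.

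\emph{Case (iv): $A_3(X)=\{\alpha\alpha\alpha,\beta\beta\alpha,\beta\beta\gamma\}$.} Here $\gamma$ appears only in $\beta\beta\gamma$, so $\gamma\notin M_2$ (Lemma~\ref{lem:major}) and $(X,E_\gamma)$ is a matching; moreover $\alpha\alpha\gamma,\alpha\beta\gamma\notin A_3(X)$ forces that matching to be a single edge, i.e.\ $|Z|=2$ for its endpoint set $Z$. Since $\alpha\alpha\beta\notin A_3(X)$ and $\beta\beta\beta\notin A_3(X)$, applying Lemma~\ref{lem:closed} to $\{\alpha\}$ (using $\alpha\alpha\alpha\in A_3(X)$, $\alpha\alpha\beta,\alpha\alpha\gamma\notin A_3(X)$) shows $(X,E_\alpha)$ is a disjoint union of cliques, and the triangle conditions force exactly one clique of size $|X|-2$ on $Y=X\setminus Z$, with all of $Y\times Z$ colored $\beta$ and the single $Z$-edge colored $\gamma$: Example~\ref{ex:4}.

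\emph{Cases (ii) and (i).} For (ii), $A_3(X)=\{\alpha\alpha\alpha,\alpha\beta\gamma,\beta\beta\alpha\}$: by Lemma~\ref{lem:tri3}'s proof $(X,E_\alpha)$ is a disjoint union of cliques (from $\alpha\beta\gamma\in A_3(X)$, $\alpha\alpha\beta,\alpha\alpha\gamma\notin A_3(X)$ and $\beta\in M_2$), and I would show it has exactly two cliques $Y,Z$; then all $Y\times Z$ pairs lie in $E_\beta\cup E_\gamma$, and $\beta\beta\gamma,\gamma\gamma\beta,\gamma\gamma\gamma\notin A_3(X)$ forces the $\gamma$-edges across to form a matching, giving Example~\ref{ex:2}. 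For (i), all three graphs are triangle-free; combining $A_3(X)=\{\alpha\alpha\beta,\alpha\alpha\gamma,\beta\beta\gamma\}$ with triangle-freeness, a vertex $x$ with $u,v,w\in R_\alpha(x)$ has $\{u,v,w\}$ spanning only $\beta,\gamma$ edges with no monochromatic triangle, so among any three of them one $\beta$ and one $\gamma$ edge appear; chasing this shows $(X,E_\alpha)$ is bipartite with parts $Y,Z$, $E_\gamma$ is a perfect matching inside $\binom{Y}{2}\cup\binom{Z}{2}$, and the rest is $E_\beta$, forcing $|Y|=|Z|=4$ via the $n\ge 5$ hypothesis and the fact that no vertex has two $\gamma$-neighbours ($\gamma\gamma\gamma\notin A_3$) while $\beta\beta\beta\notin A_3$ rigidifies the two parts—this is Example~\ref{ex:1}. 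The main obstacle is Case~(i): unlike the others it has no clique decomposition to anchor on, so the argument must extract rigidity purely from forbidding monochromatic and, crucially, $\alpha\beta\gamma$ triangles, and pin down the sizes $|Y|=|Z|=4$ exactly—I expect that to require the most careful counting, essentially rerunning the triangle-free analysis of Lemma~\ref{lem:trifree} with the extra structure that $(X,E_\alpha)$ is bipartite.
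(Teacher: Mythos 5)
Your overall plan coincides with the paper's: split into the four types of Corollary~\ref{cor:40} and force the colored graph in each case. Your treatments of cases (ii), (iii) and (iv) are essentially the paper's (and are fine, modulo the fact that in case (iv) the reason $E_\gamma$ is a single edge is really $\beta\beta\beta\notin A_3(X)$ together with the existence of a fifth point, not just $\alpha\alpha\gamma,\alpha\beta\gamma\notin A_3(X)$). The genuine gap is in case (i).

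First, the structural claim you make there is false. You assert that $n\geq 5$ forces $|Y|=|Z|=4$ and that $E_\gamma$ is a perfect matching. It does not: take $Y=\{1,2,3,4\}$ with $E_\gamma=\{12,34\}$, $E_\beta=\{13,14,23,24\}$, and a single further point $z$ joined to all of $Y$ by $\alpha$. Then $n=5$, $A_3(X)=\{\alpha\alpha\beta,\alpha\alpha\gamma,\beta\beta\gamma\}$, and $|Z|=1$. The correct conclusion, and the one the theorem actually needs (Example~\ref{ex:1} is phrased so as to include all induced subspaces $W$ with $|W|\geq 5$), is that $(X,E_\alpha)$ is an induced subgraph of $K_{4,4}$, i.e.\ $|Y|,|Z|\leq 4$ with $|Y|+|Z|=n\geq 5$. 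Second, your stated reason for expecting case (i) to be hard --- ``it has no clique decomposition to anchor on'' --- is mistaken, and missing this anchor is why your sketch stalls at ``chasing this shows.'' In case (i) the set $\{\beta,\gamma\}$ \emph{is} closed: the only triangles of $A_3(X)$ with two sides in $\{\beta,\gamma\}$ are $\beta\beta\gamma$ (third side $\gamma$) and its reading $\beta\gamma\beta$ (third side $\beta$), while $\gamma\gamma*$ and $\alpha\beta\gamma$ do not occur. Hence Lemma~\ref{lem:closed} makes $(X,E_\beta\cup E_\gamma)$ a disjoint union of cliques, $\alpha\alpha\alpha\notin A_3(X)$ forces exactly two cliques $Y,Z$, so $(X,E_\alpha)$ is complete bipartite; then $\gamma\notin M_2$ makes $E_\gamma$ a matching, the absence of $\beta\beta\beta$ and $\gamma\gamma\gamma$ bounds every $\beta$-degree by $2$ inside each clique, and since within a clique each vertex has $\beta$-degree at least $|Y|-2$, one gets $|Y|,|Z|\leq 4$ with the $\beta$-components being $4$-cycles or short paths. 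That is the paper's argument, and it is the missing step in your proposal.
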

\begin{proof}
By Corollary~\ref{cor:40}, $A_3(X)$ is determined to be the cases (i)-(iv) given in Corollary~\ref{cor:40}.

Suppose $A_3(X)=\{\alpha\alpha\beta,\alpha\alpha\gamma, \beta\beta\gamma\}$.
By Lemma~\ref{lem:closed}, $(X,E_\beta\cup E_\gamma)$ is a disjoint union of cliques.
Since $\alpha\alpha\alpha\notin A_3(X)$, $(X,E_\beta\cup E_\gamma)$ has exactly two cliques.
Since $(X,E_\gamma)$ is a matching on $X$, it follows from $\beta\beta\gamma\in A_3(X)$ that
each vertex in $(X,E_\beta)$ has degree at most two. Therefore, we conclude from $\beta\beta\beta \notin A_3(X)$ that each connected component of $(X,E_\beta)$
is either a cycle of length 4 or a path of length at most three. Thus, $(X,E_\alpha)$ is
an induced subgraph of $K_{4,4}$.
Since $(X,E_\beta)$ is the complement of $(X,E_\alpha\cup E_\gamma)$,
it follows that $X$ is an induced subspace of the example given in Example~\ref{ex:1}.

Suppose $A_3(X)=\{\alpha\alpha\alpha, \alpha\beta\gamma, \beta\beta\alpha\}$.
Note that $(X,E_\alpha)$ is a disjoint union of exactly two cliques,
$(X,E_\gamma)$ is a matching on $X$ and $(X,E_\beta)$ is the complement of $(X,E_\alpha\cup E_\gamma)$,
that is exactly what is given in Example~\ref{ex:2}.

Suppose $A_3(X)=\{\alpha\alpha\alpha, \alpha\alpha\beta, \alpha\alpha\gamma\}$.
Since
$(X,E_\beta)$ and $(X,E_\gamma)$ is matchings on $X$ such that $Y\cap Z=\emptyset$ for all $Y\in E_\beta$ and $Z\in E_\gamma$,
the configuration $(X,\{E_\alpha,E_\beta,E_\gamma\})$ is the same as in Example~\ref{ex:3}.

Suppose $A_3(X)=\{\alpha\alpha\alpha, \beta\beta\alpha, \beta\beta\gamma\}$.
Note that $(X,E_\alpha)$ is a disjoint union of more than one cliques
such that any connected components of size more than two are connected by edges in $E_\beta$
and $(X,E_\gamma)$ is just one edge which connects two isolated points in $(X,E_\alpha)$.
This configuration is the same as in Example~\ref{ex:4}.
\end{proof}

\section{Embedding to Euclidean spaces}
For a finite metric space $(X,d)$ we say that $\rho:X\to \mathbb{R}^m$ is
a \textit{Euclidean embedding} of $X$ into $\mathbb{R}^m$ 
if $(X, d)$ is isomorphic to $(\rho(X),D)$ where $D$ is the Euclidean distance. 
We denote by $m_X$ the smallest non-negative integer $m$ such that there exists
a Euclidean embedding of $X$ into $\mathbb{R}^m$. 
Euclidean embeddings of $X$ with $a_2=2$ are studied in \cite{ES66, M, R, NS2}. 
In this section, we consider some Euclidean embeddings not only for the case where $a_2=2$. 

\begin{lem}[{\cite[Lemma~4.5]{NS1}}]\label{lem:NS}
Let $(X,d)$ be a finite metric space and $A,B\subseteq X$ such that
$d(a,b)$ is constant whenever $(a,b)\in A\times B$.
Then $m_{A\cup B}\geq m_A+m_B$.
\end{lem}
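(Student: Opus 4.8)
The plan is to pass to a minimal Euclidean embedding of $A\cup B$ and to extract from the hypothesis an orthogonality relation between the affine hulls of the images of $A$ and of $B$; the inequality on dimensions then follows from a linear-algebra count. The only work is in (a) reducing to a full-dimensional embedding and (b) deriving the orthogonality, both of which are short.

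First I would fix $m=m_{A\cup B}$ together with a Euclidean embedding $\rho\colon A\cup B\to\mathbb{R}^m$. By minimality of $m$, the set $\rho(A\cup B)$ affinely spans $\mathbb{R}^m$: otherwise it lies in a proper affine subspace, which with the induced metric is isometric to $\mathbb{R}^{m'}$ for some $m'\leq m-1$, contradicting the definition of $m_{A\cup B}$. Identifying $A$ and $B$ with $\rho(A)$ and $\rho(B)$, the hypothesis together with the fact that $\rho$ is an isomorphism of metric spaces gives a constant $c$ with $\norm{a-b}=c$ for all $a\in A$, $b\in B$. Finally, $\rho$ restricts to Euclidean embeddings of $A$ and of $B$ into the affine subspaces $\mathrm{aff}(A)$ and $\mathrm{aff}(B)$, so $m_A\leq\dim\mathrm{aff}(A)$ and $m_B\leq\dim\mathrm{aff}(B)$.

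Next comes the orthogonality. For $a,a'\in A$ and $b\in B$, subtracting $\norm{a-b}^2=c^2$ from $\norm{a'-b}^2=c^2$ yields $2\langle a-a',b\rangle=\norm{a}^2-\norm{a'}^2$, which is independent of $b$; comparing the values at $b$ and at $b'\in B$ gives $\langle a-a',b-b'\rangle=0$. Hence the direction space $W_A=\mathrm{span}\{a-a'\mid a,a'\in A\}$ of $\mathrm{aff}(A)$ is orthogonal to the direction space $W_B$ of $\mathrm{aff}(B)$, so $W_A\cap W_B=\{0\}$ and $\dim(W_A+W_B)=\dim W_A+\dim W_B=\dim\mathrm{aff}(A)+\dim\mathrm{aff}(B)$. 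As $W_A+W_B$ is a subspace of $\mathbb{R}^m$, we get $m\geq\dim\mathrm{aff}(A)+\dim\mathrm{aff}(B)\geq m_A+m_B$. (If $A\cap B\neq\emptyset$, then $c=d(x,x)=0$ for $x$ in the intersection, forcing $A=B=\{x\}$, where the inequality is trivial.)

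The computation is routine; the only mild subtlety is that the restriction of a minimal embedding of $A\cup B$ to $A$ need not be a minimal embedding of $A$, so one obtains only the inequality $m_A\leq\dim\mathrm{aff}(A)$ — which is exactly the direction needed. An essentially equivalent route would be to argue with Gram matrices: placing the barycenter of $\rho(A\cup B)$ at the origin, the displayed orthogonality makes the Gram matrix block-diagonal with respect to the partition into $A$ and $B$, so its rank is at least the sum of the ranks of the $A$-block and the $B$-block; this is the matrix form of Neumaier's criterion \cite{Ne} recalled in the introduction.
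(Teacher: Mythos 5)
Your proof is correct and follows essentially the same route as the paper's: both reduce to a minimal embedding of $A\cup B$ and show that the constant-distance hypothesis forces the direction (difference) spaces of $\rho(A)$ and $\rho(B)$ to be orthogonal, after which the bound is a dimension count together with $m_A\leq\dim W_A$ and $m_B\leq\dim W_B$. The only difference is cosmetic: you obtain the orthogonality $\langle a-a',b-b'\rangle=0$ directly by polarizing $\norm{a-b}^2=c^2$, whereas the paper derives the same fact via the orthogonal projection onto $W_A$ and the observation that all points of $\rho(B)$ project to a common center of a sphere containing $\rho(A)$.
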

\begin{proof}
Let $\rho$ denote a Euclidean embedding of $A\cup B$ into $\mathbb{R}^m$ 
where $m:=m_{A\cup B}$.
Let $W$ be the subspace of $\mathbb{R}^m$ spanned by $\{\rho(a)-\rho(a_0)\mid a\in A\}$
where $a_0\in A$ is fixed.
We may assume $\rho(A)\subseteq W$ since we can translate $\rho$ to a similar Euclidean embedding if necessary.
Let $\pi$ denote the orthogonal projection from $\mathbb{R}^m$ to $W$.
Then $\rho(b)-\pi(\rho(b))$ is orthogonal to $W$ for each $b\in B$, and hence,
\[D(\pi(\rho(b)),\rho(a))^2=D(\rho(b),\rho(a))^2-D(\rho(b),\pi(\rho(b)))^2\]
is constant whenever $a\in A$ by the assumption. 
This implies that $\rho(A)$ lies in a sphere at the center $\pi(\rho(b))$.

We claim that $\pi(\rho(b_1))=\pi(\rho(b_2))$ for all $b_1,b_2\in B$.
Since $\rho(a)$, $\rho(a_1)$ and $\pi(\rho(b_i))$ form an isosceles for $a,a_1\in A$,
$\rho(a)-\rho(a_1)$ is orthogonal to $\pi(\rho(b_i))-\frac{\rho(a)+\rho(a_1)}{2}$ for
$i=1,2$. Thus, $\pi(\rho(b_1))-\pi(\rho(b_2))$ is orthogonal to $\rho(a)-\rho(a_1)$.
Since $a, a_1\in A$ are arbitrarily taken, it follows that
\[\pi(\rho(b_1))-\pi(\rho(b_2))\in W\cap W^\perp=\{0\}.\]

Again, we may assume that $\pi(\rho(b))$ is the zero vector in $\mathbb{R}^m$ since
we can translate $\rho$ to a similar Euclidean embedding with $\pi(\rho(b))=0$ if necessary.
Then $\rho(b)=\rho(b)-\pi(\rho(b))$ is orthogonal to $W$, implying 
$\rho(B)\subseteq W^\perp$.
Therefore, 
\[m_{A\cap B}=m= \dim(W)+\dim({W^\perp})\geq m_A+m_B.\]
\end{proof}

Now we exhibit Euclidean embeddings of several finite metric spaces appeared in our main results 
as follows:
\begin{enumerate}
\item $a_1=1$;

\item $a_2=2$ and one of the two elements of $A_2(X)$ induces the following graph;
\begin{enumerate}
\item The complete bipartite graph $K_{m,n-m}$; 
\item The matching consisting of only one edge;
\item The matching consisting of $m$ edges with $m>1$;
\item The pentagon.
\end{enumerate}
\item $a_3=3$;
\begin{enumerate}
\item $(X,d)$ given in Example~\ref{ex:1};
\item $(X,d)$ given in Example~\ref{ex:2} where $|Y|=|Z|=|E_\gamma|=m$ and $n=2m$;
\item $(X,d)$ given in Example~\ref{ex:3} where $|E_\beta|=p$, $|E_\gamma|=q$ and $n=2p+2q$ and $\max\{2,q\}\leq p$;
\item $(X,d)$ given in Example~\ref{ex:4},
\end{enumerate}
\end{enumerate}
We use the row vectors of a matrix to represent Euclidean embeddings by
corresponding the elements of $X$ to the row vectors.
For matrices $P$ and $Q$ over $\mathbb{R}$ we shall write the matrix
\[\begin{pmatrix}P &O\\O&Q\end{pmatrix}\] 
as $P\oplus Q$, and if $P$ and $Q$ have the same number columns,
\[\begin{pmatrix}P \\
Q\end{pmatrix}\]
as $(P;Q)$.

(i)  The row vectors of the matrix $I_n-\frac{1}{n}J_n$ where $I$ and $J$ are the identity matrix and the all-one matrix of degree $n$ give a Euclidean embedding into $\mathbb{R}^{n-1}$ isomorphic to 
$\{(x_1,x_2,\ldots, x_n)\in \mathbb{R}^n\mid \sum_{i=1}^nx_i=0\}$.

(ii)-(a) The row vectors of $(I_m-\frac{1}{m}J_m)\oplus (I_{n-m}-\frac{1}{n-m}J_{n-m})$
induces a Euclidean embedding into $\mathbb{R}^{n-2}$.

(ii)-(b) The matrix $(h;I_{n-1}-\frac{1}{n-1}J_{n-1})$ where $h$ is the negative of the first row of
$I_{n-1}-\frac{1}{n-1}J_{n-1}$ induces a Euclidean embedding into $\mathbb{R}^{n-2}$.

(ii)-(c) The matrix $(I_{n-m};(-I_m,O))$ where $O$ is the $m\times (n-2m)$ zero matrix induces 
a Euclidean embedding into $\mathbb{R}^{n-m}$.

We shall denote the $m\times 2$ matrix whose $i$th row equals
$(\cos(\frac{2\pi i}{m}), \sin(\frac{2\pi i}{m}))$ by $C_m$.

(ii)-(d) The matrix $C_5$ induces a Euclidean embedding into
$\mathbb{R}^2$.

(iii)-(a) The matrix $C_4\oplus C_4$  induces a Euclidean embedding into
$\mathbb{R}^4$.

(iii)-(b) The matrix $(I_m-\frac{1}{m}J_m; -I_m+\frac{1}{m}J_m)$ 
 induces a Euclidean embedding into
$\mathbb{R}^{m-1}$.
We remark that there exists $X\subset \mathbb{R}^m$ with $a_3(X)=3$ and $|X|=2m+2$ for any $m\ge 2$. For example, we have a regular pentagon and a cube for $m=2,3$. 

Let $c$ be a positive real number at most $\sqrt{2}$. 
Let 
\[CP_{2p,c}=\frac{1}{2p}
\begin{pmatrix} p\sqrt{2-c^2}I_{p}-2\sqrt{2-c^2}J_p & cpI_p\\
p\sqrt{2-c^2}I_{p}-2\sqrt{2-c^2}J_p &-cpI_p
\end{pmatrix}\]
and 
\[CP_{2q,c}^t=CP_{2q,c}+
\frac{t}{q}\begin{pmatrix}
J_q &O_q\\
J_q &O_q
\end{pmatrix}. 
\]

(iii)-(c) $CP_{2p,\sqrt{2}}\oplus CP_{2q,c}^t$ where $t=\sqrt{\frac{2-c}{4q}}$ induces
a Euclidean embedding into $\mathbb{R}^{n-p}$. 

(iii)-(d)
The matrix $(I_{n-2}-\frac{1}{n-2}J_{n-2})\oplus C_2$ induces a Euclidean embedding into $\mathbb{R}^{n-2}$.

Furthermore, all of the above embedding are into $\mathbb{R}^{m_X}$ since
Lemma~\ref{lem:NS} gives a lower bound for $m_X$ which may attain $m_Y+m_Z$ 
for a suitable choice of $Y$ and $Z$ in the following:

(i) The characteristic polynomial of $G$ given in (\ref{eq:G}) equals
\[t(t-\alpha^2)^{n-1}\]
where $A_2(X)=\{\alpha\}$.
Following the criterion in \cite{Ne} $m_X=n-1$.

(ii)-(a) Let $Y$ and $Z$ be the bipartition of $K_{m,n-m}$ with $|Y|=m$ and $|Z|=n-m$.
Since $|A_2(Y)|=|A_2(Z)|\leq 1$, we have $m_Y\geq m-1$ and $m_Z\geq n-m-1$.
By Lemma~\ref{lem:NS}, $m_X\geq n-2$, which attains the dimension given above.

(ii)-(b),(c) Since the complement of a matching is complete multipartite,
it follows from \cite{ES66} that there exists a unique Euclidean embedding into $\mathbb{R}^{n-1}$ up to similarity. Therefore, $m_X=n-2$ in the case (b),
and $m_X=n-m$ in the case of (c). 

(ii)-(d) We have $m_X=2$ since it is clear that there no Euclidean embedding into $\mathbb{R}$.

(iii)-(a) By Lemma~\ref{lem:NS}, $m_X\geq m_Y+m_Z=4$, which attains the dimension 
given above.

(iii)-(b) By Lemma~\ref{lem:NS}, $m_X\geq m_Y=m-1$, which attains the dimension 
given above.

(iii)-(c) Note that $d(y,z)=\alpha$ for any $(y,z)\in Y\times Z$ where 
$Y=\{x\in X\mid v(x,X)_\beta=1\}$ and $Z=\{x\in X\mid v(x,X)_\gamma=1\}$. 
Let $\rho$ denote a Euclidean embedding of $X=Y\cup Z$. 
If $\rho$ satisfy $A_2(\rho (Y))=\{c,1\}$ and $D(\rho (a),\rho (b))=1$ if $d(a,b)=\alpha$, 
then $\rho(Y)$ is isometric to $CP_{2p,c}$ for some $c$. 
Then we notice that $\rho (X)$ is isometric to 
\[ CP_{2p,c}\oplus CP_{2q,c'}^t\]
for some suitable $c'$ and $t$. 
First, we assume that $c=\sqrt{2}$. 
Then $c'\ne \sqrt{2}$ since $c=c'$ implies $\beta=\gamma$. 
Since $D(\rho (y),\rho (z))=1$ for any $y\in Y$, $z\in Z$, we have $t=\sqrt{\frac{2-c'}{4q}}$. 
Then $\dim (\rho (X))\ge p+q=n-p$ where $\dim (X)$ is the dimension of the subspace 
spanned by $\{\rho (x)-\rho (x_0)\mid x\in X\}$ where $x_0\in X$ is fixed. 
Next, we assume that $c'=\sqrt{2}$. Then, by a similar argument,  
we have $\dim (X)\ge n-q\ge n-p$. 
Finally, we assume that $c\ne \sqrt{2}$ and $c\ne \sqrt{2}$. 
Then $\dim (\rho (X))\ge n-2\ge n-p$. 
Therefore $m_X\ge n-p$ which attains the dimension given above. 

(iii)-(d) By Lemma~\ref{lem:NS}, $m_X\geq m_Y+m_Z=n-3+1=n-2$, which attains the dimension 
given above.

Epstein, Lott, Miller and Palsson \cite{ELMP} discussed about a set with few distinct triangles.  In this paper, we adopt $a_3$ in stead of the number of distinct triangles, i.e., 
a set of collinear three points is also regarded as a triangle in this paper. 
We define 
\[F_m (k,t) =\max \{|X| \mid X\subset \mathbb{R}^m, a_k(X)=t\}.\]

By the above argument of this section, we have the following theorem. 

\begin{thm} We have the following: 
\begin{enumerate}
\item $F_2(3,2)=5$ and $F_m(3,2)=2m$ $(m\ge 3)$ hold, 
and optimal sets are the pentagon and cross polytopes, respectively; 
\item $2m+2\le F_m(3, 3) \le \max \{2m+2, F_m(2,2)\}$.
\end{enumerate}
\end{thm}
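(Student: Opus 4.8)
The plan is to establish the two items separately, reading off both the lower and upper bounds from the structural results of Section~3 together with the Euclidean embeddings catalogued earlier in this section. For part (i), I would first argue that any $X\subset\mathbb{R}^m$ with $a_3(X)=2$ is, by Theorem~\ref{thm:25}, isomorphic to a metric space in which some $(X,E_\alpha)$ is a complete bipartite graph, the complement of a matching, or the pentagon; moreover $a_2(X)=2$ by Theorem~\ref{thm:3}. Then I would invoke the embedding analysis (ii)-(a)--(d) above: a complete bipartite $K_{p,q}$ on $n$ points needs $m_X=n-2$ (via Lemma~\ref{lem:NS} applied to the two parts), the complement of a matching of $m\ge 1$ edges needs $m_X=n-2$ or $n-m$, and the pentagon needs $m_X=2$. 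Turning this around: for $m=2$, only the pentagon and the $2$-dimensional members of these families fit, and a short case check (a complete bipartite graph in $\mathbb{R}^2$ forces each part to have at most $2$ points, i.e.\ $K_{2,2}$ with $n=4<5$; the complement of a matching in $\mathbb{R}^2$ is a cross polytope of dimension $\le 2$, giving $n\le 4$) shows $F_2(3,2)=5$, realized by the regular pentagon. For $m\ge 3$, the pentagon is excluded (it embeds only in $\mathbb{R}^2$), the complete bipartite option gives $n=m+2\le 2m$, and the complement of a perfect matching on $2m$ points (the $m$-dimensional cross polytope) gives exactly $n=2m$ and embeds in $\mathbb{R}^m$; since the complement of a matching on $n$ points with $m$ edges requires dimension $n-m\ge m$ forcing $n\le 2m$, we get $F_m(3,2)=2m$ with the cross polytope optimal.

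For part (ii), the lower bound $2m+2\le F_m(3,3)$ is the remark following embedding (iii)-(b): the matrix $(I_{m+1}-\tfrac{1}{m+1}J_{m+1}\,;\,-I_{m+1}+\tfrac{1}{m+1}J_{m+1})$ is a Euclidean embedding into $\mathbb{R}^m$ of a $(2m+2)$-point metric space with $a_3=3$ (the cross-polytope-like configuration of Example~\ref{ex:3} with $p=m+1$, $q=0$ adjusted, or more directly the explicit pentagon/cube pattern noted there). For the upper bound, I would split on whether $a_2(X)=2$ or $a_2(X)=3$ for the optimal $X$. If $a_2(X)=2$ and $a_3(X)=3$, then $X$ is a two-distance set, so $|X|\le F_m(2,2)$ by definition. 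If $a_2(X)=3$, then by Theorem~\ref{thm:a4} (using $n\ge 5$, which holds once $2m+2\ge 5$, i.e.\ $m\ge 2$) the space $(X,d)$ is isomorphic to one of Examples~\ref{ex:1}--\ref{ex:4}, and I would bound each of these in $\mathbb{R}^m$. Example~\ref{ex:1} has $n=8$ and (by (iii)-(a)) needs $m_X=4$, so it contributes $8=2\cdot4$; Example~\ref{ex:4} has $m_X=n-2$ so $n\le m+2\le 2m+2$; Examples~\ref{ex:2},\ref{ex:3} give $m_X=m-1$ resp.\ $m_X=n-p$ with the stated parameter constraints, yielding $n\le 2m+2$ in each case after the arithmetic. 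Collecting, $|X|\le\max\{2m+2,\,F_m(2,2)\}$, which is the claimed bound.

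The main obstacle I anticipate is the upper-bound bookkeeping in part (ii) for the $a_2(X)=3$ case: one must be careful that Theorem~\ref{thm:a4} classifies the \emph{metric} structure only up to isomorphism (not isometry), so for the Euclidean bound one needs the actual embedding dimensions established in (iii)-(a)--(d), and these were derived there under specific parametrizations (e.g.\ $|Y|=|Z|=|E_\gamma|=m$ in Example~\ref{ex:2}, $\max\{2,q\}\le p$ in Example~\ref{ex:3}); verifying that \emph{every} realizable sub-configuration still obeys $n\le 2m+2$, including the induced-subspace possibilities flagged in the proof of Theorem~\ref{thm:a4}, is where the care is needed. A secondary subtlety is confirming that the $2$-dimensional census in part (i) really excludes everything but the pentagon — in particular that no larger complete bipartite or matching-complement configuration sneaks into $\mathbb{R}^2$ — but this follows cleanly from Lemma~\ref{lem:NS} and the explicit characteristic polynomials already computed. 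I would also remark that the bound in (ii) is not claimed to be tight in general, which is consistent with the inequality (rather than equality) in the statement.
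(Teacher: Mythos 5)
Your argument is correct and is essentially the proof the paper intends: the theorem is given there only with the remark ``by the above argument of this section,'' and your reconstruction --- Theorem~\ref{thm:25} (resp.\ Theorem~\ref{thm:a4} together with the split on $a_2\in\{2,3\}$ and the bound $|X|\le F_m(2,2)$ in the two-distance case) combined with the embedding-dimension computations (ii)-(a)--(d) and (iii)-(a)--(d) --- is exactly that argument. The only slip is attributing the lower-bound configuration $(I_{m+1}-\tfrac{1}{m+1}J_{m+1};-I_{m+1}+\tfrac{1}{m+1}J_{m+1})$ to Example~\ref{ex:3}: it is the embedding (iii)-(b) of Example~\ref{ex:2}.
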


Lison\v{e}k\cite{L97} determined $F_m(2,2)$ for $m\le 8$. 

\begin{thm}
$F_2(2,2)=5, F_3(2,2)=6, F_4(2,2)=10, F_5(2,2)=16, F_6(2,2)=27, F_7(2,2)=29$ and $F_8(2,2)=45.$  

In particular, the unique optimal set $X$ in $\mathbb{R}^4$ and $\mathbb{R}^5$ satisfies $a_3(X)=4$ and $a_3(X)=3$, respectively. 
\end{thm}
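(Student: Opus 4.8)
The list of values $F_2(2,2)=5,\dots,F_8(2,2)=45$ is precisely Lison\v{e}k's classification of maximal two-distance sets in dimensions at most $8$, so the plan is to quote \cite{L97} rather than reprove it. For orientation only: $F_2(2,2)=5$ and $F_3(2,2)=6$ are classical (Einhorn--Schoenberg \cite{ES66}), realized by the regular pentagon and the regular octahedron; the general inequality $F_m(2,2)\le\binom{m+2}{2}$ of Larman--Rogers--Seidel \cite{LRS} is met with equality only at $m=8$, where $\binom{10}{2}=45$; and the four intermediate values $10,16,27,29$ in $\mathbb{R}^4,\dots,\mathbb{R}^7$ are the substantive, computer-assisted part of \cite{L97}. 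Thus the first sentence of the theorem is a citation.

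It then remains to verify the parenthetical statement about $a_3$ of the extremal sets. In every case $a_2=2$, say $A_2(X)=\{\alpha,\beta\}$, so each $3$-subset of $X$ is isometric to exactly one of the four triangles $\alpha\alpha\alpha$, $\alpha\alpha\beta$, $\alpha\beta\beta$, $\beta\beta\beta$, whence $a_3\le 4$ always. For $\mathbb{R}^5$ the extremal $16$-point set is a two-distance realization of the Clebsch graph (the strongly regular graph with parameters $(16,5,0,2)$), with the two Euclidean distances corresponding to adjacency and non-adjacency; since this graph is triangle-free, the remark following Lemma~\ref{lem:mk} gives $\alpha\alpha\alpha\notin A_3(X)$ for the adjacency distance $\alpha$, so $a_3\le 3$. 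For the reverse inequality one exhibits the three triangles $\alpha\alpha\beta$, $\alpha\beta\beta$, $\beta\beta\beta$ directly (alternatively: $a_3\ne 1$ by Theorem~\ref{thm:a1} since $a_2=2$ and $n=16>5$, and $a_3\ne 2$ by the contrapositive of Theorem~\ref{thm:25}, because neither the Clebsch graph nor its complement, being regular of degree $5$ and $10$ on $16$ vertices, is a complete bipartite graph, the complement of a matching, or a pentagon). Hence $a_3=3$. For $\mathbb{R}^4$ I would take the ten points of the extremal configuration exhibited in \cite{L97} and check directly from its coordinates that all four triangle types are represented; equivalently, that both distance graphs $(X,E_\alpha)$ and $(X,E_\beta)$ contain a triangle (so that $\alpha\alpha\alpha,\beta\beta\beta\in A_3(X)$) and that the two isosceles triangles $\alpha\alpha\beta$ and $\alpha\beta\beta$ occur as well. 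This forces $a_3=4$.

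The only genuinely hard ingredient, Lison\v{e}k's classification, is imported wholesale from \cite{L97}; everything that remains is a finite verification. In the $\mathbb{R}^5$ case that verification is essentially automatic, since the automorphism group of the Clebsch graph is transitive on ordered edges and on ordered non-edges, so there are at most three congruence classes of ordered triples to inspect and triangle-freeness discards one of the four candidate shapes. In the $\mathbb{R}^4$ case it reduces to reading four triangle shapes off the explicit coordinate list of \cite{L97}, a short bookkeeping exercise requiring no new idea; the only place one must be careful is to make sure that the extremal set used is the one singled out in \cite{L97} and that all four shapes (rather than merely three) genuinely appear.
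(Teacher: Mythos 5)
The paper offers no proof of this theorem at all: the displayed values are quoted verbatim from Lison\v{e}k \cite{L97}, and the ``in particular'' clause is asserted without any verification. So your plan --- import the classification from \cite{L97} and then check $a_3$ on the explicit extremal configurations --- is the only reasonable reading of what the authors intend, and your $\mathbb{R}^5$ argument is correct and complete: the $16$-point set is the spherical representation of the Clebsch graph $\mathrm{srg}(16,5,0,2)$, which is triangle-free, so with $\alpha$ the adjacency distance $\alpha\alpha\alpha\notin A_3(X)$, while $\lambda=0$ forces every path of length two to close into an $\alpha\alpha\beta$ triangle, the complement has triangles, and each edge has $14-8=6$ vertices nonadjacent to both ends, giving $\alpha\beta\beta$; hence $a_3=3$.

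The genuine gap is exactly at the point you flagged and then deferred as ``a short bookkeeping exercise'': the $\mathbb{R}^4$ case. The unique $10$-point two-distance set in $\mathbb{R}^4$ is the set of midpoints of the edges of a regular $4$-simplex, i.e.\ $\{e_i+e_j : 1\le i<j\le 5\}$ in the hyperplane $\sum x_k=2$, with squared distances $2$ (when the index pairs meet) and $4$ (when they are disjoint). If you carry out the check you propose, you find only \emph{three} triangle shapes: $(\sqrt2,\sqrt2,\sqrt2)$, $(\sqrt2,\sqrt2,2)$ and $(\sqrt2,2,2)$. The fourth shape $(2,2,2)$ would require three pairwise disjoint $2$-subsets of a $5$-set, which do not exist --- equivalently, the Kneser graph $K(5,2)$ (the Petersen graph) is triangle-free. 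So the computation yields $a_3=3$, not $4$, and your proof cannot be completed as written; indeed the statement itself appears to be in error for $\mathbb{R}^4$ (which also puts the uniqueness assertion ``every optimal set in $\mathbb{R}^4$ is given by (iii)-(b)'' of the subsequent corollary in doubt, since this $10$-point set would then also be optimal for $F_4(3,3)$). Your caveat that one ``must be careful\ldots that all four shapes genuinely appear'' identifies precisely the right pressure point, but it is not a gap you can close: either the extremal set is not what both you and the literature say it is, or the claimed value $a_3=4$ is wrong.
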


By combining the above theorems, we have the following. 

\begin{cor}
\[\mbox{$F_2(3,3)=6, F_3(3,3)=8, F_4(3,3)=10$ and $F_5(3,3)=16.$}\]
In particular, every optimal set in $\mathbb{R}^m$ for $m=2,3,4$ is given by (iii)-(b) of this section. 
\end{cor}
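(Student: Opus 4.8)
The plan is to assemble this corollary from the two theorems it sits between, together with the embedding constructions (iii)-(b) already exhibited in this section. First I would record the relevant inputs: from the preceding theorem we have $2m+2\le F_m(3,3)\le \max\{2m+2, F_m(2,2)\}$, and from Lison\v{e}k's theorem we have $F_2(2,2)=5$, $F_3(2,2)=6$, $F_4(2,2)=10$, $F_5(2,2)=16$. So for each of $m=2,3,4,5$ I would compute $\max\{2m+2, F_m(2,2)\}$, namely $\max\{6,5\}=6$, $\max\{8,6\}=8$, $\max\{10,10\}=10$, $\max\{12,16\}=16$, and then check that the lower bound $2m+2$ meets this upper bound in each case: $6=6$, $8=8$, $10=10$, and for $m=5$ we have $2m+2=12<16$, so the upper bound $16$ is \emph{not} automatically attained by $2m+2$ and needs a separate argument.

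For $m=2,3,4$ the equality $F_m(3,3)=2m+2$ is then immediate, and the extremal configuration is realized by (iii)-(b) with $2m+2$ points in $\mathbb{R}^{m-1}$... wait, one must be careful: (iii)-(b) produces a metric space on $2m+2$ points embedded in $\mathbb{R}^{m-1}$, so to get an optimal set in $\mathbb{R}^m$ with $2m+2$ points I would take the instance of (iii)-(b) with parameter $m+1$, giving $2(m+1)=2m+2$ points in $\mathbb{R}^{m}$. I would verify $a_3=3$ for this configuration directly from Example~\ref{ex:3} (or rather the Example corresponding to (iii)-(b), which is the antipodal/cross-polytope-like double), and cite the dimension computation (iii)-(b) already carried out. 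For the uniqueness claim ("every optimal set in $\mathbb{R}^m$ for $m=2,3,4$ is given by (iii)-(b)"), I would invoke Theorem~\ref{thm:a4}: any $X$ with $a_2=a_3=3$ is isomorphic to one of Examples~\ref{ex:1}--\ref{ex:4}, and among these only Example~\ref{ex:3} admits a Euclidean embedding of the required large cardinality in the given dimension — the others have $|X|$ bounded by a smaller function of the embedding dimension (via Lemma~\ref{lem:NS} applied to their natural bipartitions), so they cannot be optimal. One also has to rule out $a_2=2$ optimal sets: if $a_2\le 2$ then $|X|\le F_m(2,2)$, which for $m=2,3,4$ equals $5,6,10$, strictly less than (or equal, only in the $m=4$ case) to $2m+2$; in the $m=4$ boundary case one checks the $a_2=2$ optimizer is \emph{not} of the stated form but also $a_3(X)=4\ne 3$ by Lison\v{e}k's theorem, so it does not count as an $F_4(3,3)$-optimizer and no contradiction to uniqueness arises.

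The case $m=5$ is the one genuine obstacle. Here $2m+2=12$ but the claim is $F_5(3,3)=16$, so the lower bound $2m+2$ is too weak and I need a sharper lower bound. The key observation is the last sentence of Lison\v{e}k's theorem: the unique optimal set $X$ in $\mathbb{R}^5$ with $a_2=2$ and $|X|=16$ satisfies $a_3(X)=3$. Hence this very set is an admissible competitor for $F_5(3,3)$, giving $F_5(3,3)\ge 16$; combined with the upper bound $F_5(3,3)\le\max\{12, F_5(2,2)\}=\max\{12,16\}=16$ we get equality. (Note this optimizer has $a_2=2$, not $a_2=3$, so Theorem~\ref{thm:a4} does not classify it, but that is fine — it lies under Theorem~\ref{thm:25}, being built from a triangle-free graph, consistent with the remark following that theorem.) The uniqueness statement is only asserted for $m=2,3,4$, so I need not address uniqueness in dimension $5$.

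I expect the main obstacle to be the uniqueness half for $m=4$: there the numerics are tight ($F_4(2,2)=10=2m+2$), so I must carefully separate the $a_3=3$ optimizer (which exists, namely (iii)-(b) with parameter $5$, giving $10$ points in $\mathbb{R}^4$) from the $a_2=2$ optimizer of the same cardinality (which by Lison\v{e}k has $a_3=4$), and confirm that no other member of Examples~\ref{ex:1}--\ref{ex:4} reaches $10$ points in $\mathbb{R}^4$ — this requires running Lemma~\ref{lem:NS} against the bipartition structure of Examples~\ref{ex:1}, \ref{ex:2}, \ref{ex:4} to show each forces the embedding dimension above $4$ once $|X|$ is large, which is a short but case-by-case verification. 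Everything else is bookkeeping with the four numerical values of $F_m(2,2)$.
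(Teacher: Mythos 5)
Your proposal is correct and follows the same route the paper intends: the paper's entire proof is ``by combining the above theorems,'' and your computation of $\max\{2m+2,F_m(2,2)\}$ for $m=2,3,4$, your use of the fact that Lison\v{e}k's $16$-point set in $\mathbb{R}^5$ has $a_3=3$ to close the gap at $m=5$, and your appeal to Theorem~\ref{thm:a4} together with Lemma~\ref{lem:NS} for the uniqueness claim are exactly the ingredients being combined. You in fact supply more detail than the paper does, correctly flagging the two delicate points it glosses over (the parameter shift $m\mapsto m+1$ in (iii)-(b), and the tight $m=4$ case where the $10$-point two-distance set must be excluded because its $a_3$ equals $4$).
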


\section*{Acknowledgement}
This research was supported by Basic Science Research Program through the National Research Foundation of Korea(NRF) funded by the Ministry of  Science, ICT \& Future Planning(NRF-2016R1A2B4013474). 
The second author was supported by JSPS KAKENHI Grant-in-Aid for Scientific Research (C) 26400003. We also thank Sho Suda for helpful comments. 


\end{document}